\let\ORIlabel\label
\let\ORIrefstepcounter\refstepcounter
\AddToHook{package/hyperref/before}{
   \let\label\ORIlabel 
   \let\refstepcounter\ORIrefstepcounter}
\documentclass[final]{siamart220329}

\usepackage{mathtools}
\usepackage{amsfonts,latexsym,amssymb,amscd,epsf}
\newcommand{\smallnotin}{\mathrel{\scalebox{0.55}{$\notin$}}}
\usepackage[utf8]{inputenc}
\usepackage{bbm}
\usepackage[caption=false]{subfig}
\usepackage{tikz}
\usepackage{verbatim}
\usepackage{tcolorbox}
\tcbuselibrary{skins}
\usepackage{booktabs,multirow}

\usepackage{etoolbox}
\usepackage{xcolor}
\newcommand{\ignore}[1]{}
 
\usepackage{xargs}                      
\usepackage[colorinlistoftodos,prependcaption,textsize=tiny]{todonotes}
\usepackage{hyperref}

\usepackage{booktabs}
\usepackage{setspace}

\usepackage{makecell}
\usepackage{url}
\usepackage{array}
\usepackage{multirow}
\usepackage{lscape} 
\usepackage[thinc]{esdiff}

\usepackage[super]{nth}
\usepackage{algorithm}
\usepackage{algpseudocode}

\usepackage{environ}
\algnewcommand{\LineComment}[1]{\Statex \(\triangleright\) \small #1}
\algnewcommand{\Input}[1]{\Statex \textbf{input:} #1}
\algnewcommand{\Output}[1]{\Statex \textbf{output:} #1}

\usepackage{environ}

\newcommand{\enVert}[2][-1]{
	\ensuremath{\mathinner{
			\ifthenelse{\equal{#1}{-1}}{ 
				\!\left\lVert#2\right\rVert}{}
			\ifthenelse{\equal{#1}{0}}{ 
				\lVert#2\rVert}{}
			\ifthenelse{\equal{#1}{1}}{ 
				\!\bigl\lVert#2\bigr\rVert}{}
			\ifthenelse{\equal{#1}{2}}{ 
				\!\Bigl\lVert#2\Bigr\rVert}{}
			\ifthenelse{\equal{#1}{3}}{ 
				\!\biggl\lVert#2\biggr\rVert}{}
			\ifthenelse{\equal{#1}{4}}{ 
				\!\Biggl\lVert#2\Biggr\rVert}{}
		}
}}

\DeclareMathAlphabet{\mathdutchcal}{U}{dutchcal}{m}{n}
\SetMathAlphabet{\mathdutchcal}{bold}{U}{dutchcal}{b}{n}
\DeclareMathAlphabet{\mathdutchbcal}{U}{dutchcal}{b}{n}
\theoremstyle{remark}
\newtheorem{remark}[theorem]{Remark}

\usepackage{cleveref}

\usepackage{listings}

\usepackage{dsfont}
\usepackage{bm}				
\DeclareMathAlphabet{\mymathbb}{U}{BOONDOX-ds}{m}{n}
\DeclareMathAlphabet\mathbfcal{OMS}{cmsy}{b}{n}
\newcommand{\set}[1]{\mathcal{#1}}

\newcommand{\numberset}{\mathbb}

\newcommand{\N}{\numberset{N}}

\newcommand{\E}{\numberset{E}}

\newcommand{\bigO}{\mathcal{O}}
\usepackage{tensor}
 
\usepackage{graphicx}
\usepackage{subfig}
\usepackage{caption}
\usepackage{pgfplots,relsize}
\pgfplotsset{compat=1.14}
\graphicspath{ {./figures/} }
\usepackage{tikz}
\usetikzlibrary{hobby, shapes}
\usetikzlibrary{quotes,angles,positioning}
\usetikzlibrary{calc}
\usetikzlibrary{matrix}
\usetikzlibrary{patterns}

\pgfplotsset{grid style={line width=0.05pt, gray}}
\pgfplotsset{minor grid style={gray}}
\pgfplotsset{major grid style={gray}}

\newcommand{\hypref}[2]{\hyperref[#2]{#1 \ref*{#2}}}

\usepackage[customcolors]{hf-tikz} 
\usetikzlibrary{patterns}
\usetikzlibrary{matrix,decorations.pathreplacing,calc}

\pgfkeys{tikz/mymatrixenv/.style={decoration=brace,every left delimiter/.style={xshift=3pt},every right delimiter/.style={xshift=-3pt}}}
\pgfkeys{tikz/mymatrix/.style={matrix of math nodes,left delimiter=[,right delimiter={]},inner sep=2pt,column sep=1em,row sep=0.5em,nodes={inner sep=0pt}}}
\pgfkeys{tikz/mymatrixbrace/.style={decorate,thick}}

\pgfkeys{tikz/mymatrixenv/.style={decoration={brace},every left delimiter/.style={xshift=8pt},every right delimiter/.style={xshift=-8pt}}}
\pgfkeys{tikz/mymatrix/.style={matrix of math nodes,nodes in empty cells,left delimiter={[},right delimiter={]},inner sep=1pt,outer sep=1.5pt,column sep=2pt,row sep=2pt,nodes={minimum width=20pt,minimum height=10pt,anchor=center,inner sep=0pt,outer sep=0pt}}}
\pgfkeys{tikz/mymatrixbrace/.style={decorate,thick}}

\usepackage{extramath}

\definecolor{matlabred}{rgb}{0.9047,    0.1918,    0.1988}
\definecolor{matlabblue}{rgb}{0.2941    0.5447    0.7494}
\definecolor{matlabgreen}{rgb}{	0.3718    0.7176    0.3612}
\definecolor{matlaborange}{rgb}{1.0000    0.5482    0.1000}

\usepackage[utf8]{inputenc}
\usepackage{todonotes}

\newbool{showfigures}
\setbool{showfigures}{true} %
\usepackage[symbol]{footmisc}

\renewcommand{\kron}{\otimes}
\renewcommand{\ten}[1]{\mathbfcal{#1}}

\title{A PRACTICAL MODE-PARALLEL IMPLEMENTATION OF THE (H-)TUCKER DECOMPOSITION VIA RANDOMIZATION}
\author{Martina Iannacito\thanks{Dipartimento di Matematica and (AM)$^2$,
Alma Mater Studiorum Universit\`a di Bologna,
Piazza di Porta San Donato  5, I-40127 Bologna, Italy,
{\tt \{martina.iannacito,sascha.portaro,davide.palitta\}@unibo.it}} \and Sascha Portaro\footnotemark[1] \and Davide Palitta\footnotemark[1] \and Claudio Arlandini\thanks{CINECA, Via Magnanelli 2, I-40033 Casalecchio di Reno (BO), Italy, {\tt \{c.arlandini,d.brandoni\}@cineca.it}} \and Domitilla Brandoni\footnotemark[2]
 }

\ifpdf
\hypersetup{pdftitle={randTucker}}
\fi

\begin{document}
	
	\maketitle
	
	

	\begin{abstract}
	In the last decades, tensors have emerged as the right tool to represent multidimensional data in a compact yet informative manner. Moreover, it is well-known that by performing low-rank factorizations of such tensors one is often able to effectively unveil possible hidden structure in data, mainly due to unexpected dependencies among the different variables encoded in the given tensor. However, computing these factorizations is extremely energy-consuming and memory-demanding, especially for high-dimensional tensors, namely those with a large number of modes. 

In this paper we focus on two state-of-the-art tensor decompositions: the Tucker and H-Tucker decompositions.
We propose novel numerical strategies able to perform these factorizations in a \emph{mode-parallel} fashion, that is the operations required by the algorithm along all modes are performed in parallel. This is in contrast to what is achieved by many procedures available in the literature that parallelize some of the operations along each mode, e.g., tensor-times-matrix steps, while still visiting one mode at the time in a sequential manner. Our strategies make use of cutting-edge randomization techniques comprising fiber sampling and randomized range-finding steps. {\color{black}In case of Tucker decomposition, we provide upper bounds on the expected value of the error provided sufficiently large sampling parameters have been adopted.} A panel of numerical results showcases the potential of our approach in reducing both the running time and the storage demand of {\color{black} computing Tucker and H-Tucker decompositions}. Moreover, experiments carried out in HPC environments illustrate the good scaling of our mode-parallel approach.

 	\end{abstract}
	\tableofcontents

%
%
%



\section{Introduction}
In the last years, tensors have become one of the most suitable mathematical objects for representing large datasets and their fundamental role is widely recognized. 
Tensor techniques find application in a variety of different fields including deep neural networks~\cite{Panagakis2024TMinDL}, image classification and recognition~\cite{SAVAS2007993,Vasilescu2002Tensorfaces}, computer vision~\cite{Donoser2010CompVisTD}, recommender systems~\cite{Rafailidis2013TagCluster, Nanopoulos2010MusicBox}, signal processing~\cite{Sidiropoulos2017SPML}, and many others.

The main feature of tensor techniques is the possibility of encoding data of very different nature in a single, compact tool that, on one hand, obeys 
a strict mathematical formalism and, on the other hand, has the flexibility of being represented by means of one's favorite tensor format. A non\textcolor{black}{-}complete list of tensor formats available in the literature include\textcolor{black}{s} the Tucker format~\cite{Tucker1966} and its hierarchical counterpart, the H-Tucker format~\cite{Hackbusch2009HT, Grasedyck2009}, Tensor-Train (TT)~\cite{Oseledets2011}, and the Canonical Polyadic (CP) format~\cite{Hitchcock1927cpd}, to name a few. Further details on these widely used tensor formats can be found in~\cite{BallardKolda2025}. Each format comes with its own arithmetic and related pros and cons in terms of computational complexity and interpretability features.
In this paper we focus on the Tucker and H-Tucker formats as these are among the most widely used tensor formats for compressing and revealing hidden patterns in data while preserving its multiway nature. 

Given an order-$d$ tensor $\ten{X}\in\mathbb{R}^{n_1\times\cdots\times n_d}$, representing it in the Tucker format means factorizing $\ten{X}$ as the product of $d$ factor matrices $\mat{U}_i\in\R^{n_i\times r_i}$ and an order-$d$ tensor of smaller size $\ten{S}\in\mathbb{R}^{r_1\times\cdots\times r_d}$, $r_i\leq n_i$ for all $i=1,\ldots,d$, such that 
\begin{equation}\label{eq:Tucker1}
    \ten{X}=\ten{S}\times_1\mat{U}_1\times_2\cdots\times_d\mat{U}_d,
\end{equation} 
where $\times_i$ denotes the mode-$i$ product; see section~\ref{Tensor basics}.

Each factor matrix $\mat{U}_i$ tries to capture most of the information of the column space of the corresponding tensor matricization, whose column number grows exponentially with $d$. Classical Tucker decomposition algorithms form the $d$ tensor matricizations and factorize them by SVD; this approach is known as the High-Order SVD (HOSVD)~\cite{Lathauwer2000HOSVD}.

One of the main disadvantages of the Tucker format~\eqref{eq:Tucker1} is the allocation of the core tensor $\ten{S}\in\mathbb{R}^{r_1\times\cdots\times r_d}$ whose number of entries grows exponentially with the number of modes $d$. To reduce this important storage cost, Hackbusch proposed the H-Tucker format~\cite{Hackbusch2009HT} which is able to attain memory requirements that now scale linearly with $d$.
This is achieved by relying on a binary tree structure to factorize a given tensor, forming and decomposing a tensor matricization for each tree node; see~\cite{Grasedyck2009} and also section~\ref{H-Tucker} for further details.

The computation of both the Tucker and H-Tucker decompositions requires performing a significant number of SVDs of large dimensional objects \textcolor{black}{often} 
leading to excessive computational costs and memory requirements. The drawbacks of these deterministic algorithms have motivated the search for more efficient methods and randomization-based approaches are frequently employed to decrease these computational costs, boosting algorithmic performance; see, e.g.,~\cite{Ahmadi2021ReviewRandTucker}. Many of these techniques simply amount to applying state-of-the-art randomized techniques for matrix (low-rank) approximation to each matricization of the tensor of interest. While this approach often reduces the running time of the overall procedure, it does not cut down its storage demand due to the requirement of assembling the full matricizations.

In this paper we present a new Tucker decomposition technique combining tensor fiber sampling and randomized range-finding techniques. In particular, thanks to the fiber sampling step we overcome the computational costs and memory requirements of forming any tensor matricizations, while the range-finding step improves the approximation of the tensor matricization column space. 
Avoiding the allocation of any matricization is key to attain a  practical mode-parallel algorithm for Tucker decomposition. {\color{black}In particular, while the original algorithm is embarrassingly parallelizable mode-wise, standard numerical techniques need to assemble tensor matricizations, whose number of entries is equal to the one of the original tensor. This is the biggest obstacle in achieving real mode-parallel implementations. Indeed, the computational nodes of a parallel infrastructure seldom have the resources to fully allocate a copy of the tensor of interest.} Now, thanks to our fiber sampling step, we are able to allocate only a {\color{black}limited} number of fibers on each computational node thus attaining an actual mode-parallel approach able to reduce inter-node communication costs and local memory requirements. Other contributions available in the literature proposed a parallel implementation of the Tucker decomposition; see, e.g.,~\cite{parallelTucker1,parallelTucker2,Minster2024ReKron}. However, their main focus is to increase the efficiency of Tensor-Times-Matrix (TTM) operations via parallelization. Our goal here is different. Our mode-wise approach fully unlocks the natural parallel nature of the Tucker decomposition by overcoming the need of allocating $d$ copies of the entire original tensor. Clearly, along each mode, parallel TTMs can be then performed by taking inspiration from the aforementioned works to further improve the performance and scalability of the overall method. For the H-Tucker decomposition, instead, we have not been successful \textcolor{black}{in} finding any work considering the parallelization of this algorithm.

Starting from our randomized Tucker decomposition, we introduce the same modifications in the H-Tucker decomposition. Since, in this case, the number of nodes in the decomposition tree is usually larger than $d$, the computational benefits of our randomized H-Tucker variant can be even more appreciable. 

Here is a synopsis of the paper. In section~\ref{Preliminaries} we report some \textcolor{black}{preliminary} background material like the notation we adopt throughout the paper (section~\ref{Notation}), some basic concepts and definitions for handling tensors (section~\ref{Tensor basics}), and the randomized SVD (RSVD) approach for matrices (section~\ref{RandSVD}) presented in~\cite{Halko2011rHOSVD}. Section~\ref{sec:prelim:tucker} starts with briefly recalling the Tucker decomposition, its approximation by HOSVD, and the variant of the latter algorithm equipped with RSVD in place of the standard SVD. Our novel Sub-R-HOSVD is proposed in section~\ref{sec:sr-HOSVD} whereas we derive related error bounds in section~\ref{sec:errbound}. A diverse set of numerical experiments illustrating the potential of our approach is reported in section~\ref{sec:tucker:exp}. Section~\ref{H-Tucker} follows {\color{black} a pattern similar to that} of section~\ref{sec:prelim:tucker} but for the H-Tucker decomposition. In particular, after recalling the latter decomposition and the Root-to-Leaves (RtL) algorithm in its original form, we propose our randomized variant in section~\ref{Subsampled randomized LtR-HT}. Its numerical performance is displayed in section~\ref{Numerical experiments - HT}. The paper ends with some conclusions in section~\ref{Conclusions}.


\section{Preliminaries}\label{Preliminaries}

\subsection{Notation}\label{Notation} 
In this section we introduce the notation we adopt throughout the paper.
Low\textcolor{black}{er}-case, bold letters ($\vec{x}$) will denote vectors, while capital, bold letters ($\mat{A}$) will denote matrices.
Calligraphic, bold letters ($\ten{X}$) stand for order-$d$ tensors, either in full or in compressed format, of size $(n_1,\dots, n_d)^{T}$. Calligraphic capital letters ($\set{S}$) denote sets. 
To shorten the notation, we denote by $[n]$ the set of all integers between $1$ and $n$. Additionally, we will write that 
order-$d$ tensors belong to $\R^{\vec{n}}$ where $\vec{n} = (n_1,\cdots, n_d)$. The symbol $n_{\ne k}$ denotes the product of all entries of $\vec{n}$ except the $k$th one, namely $n_{\ne k}=\prod_{i=1, i\ne k}^dn_i$. {\color{black}Given a tensor $\ten{X}$ (or a matrix $\mat{X}$) $\|\ten{X}\|_F$ ($\|\mat{X}\|_F$) denotes its Frobenius norm, namely the square root of the sum of the squares of all its elements.}

\subsection{Tensor basics}\label{Tensor basics}
In this section we \textcolor{black}{review} some of the basic operations involving tensors by setting the notation and conventions that will be largely used in the remainder of this paper.

\begin{definition}
	The matricization (or unfolding) along mode-$k$ of an order-$d$ tensor $\ten{A}$ of size $\vec{n}$ results in a matrix, $\mat{A}^{(k)}$, such that
	\[
		\mat{A}^{(k)}(i_k, j) = \ten{A}(i_1, \dots, i_d),
	\]
	{\color{black}where $j = \mathbb{L}(i_1, \dots, i_{k-1}, i_{k+1}, \dots, i_d)$ with $\mathbb{L}$ the function that associates a multilinear index to a linear one, cf\textcolor{black}{.}~\cite[Definition 2.2]{BallardKolda2025}}\footnote{\textcolor{black}{As discussed in~\cite[Section 2.1-2.2]{BallardKolda2025}, the natural or reverse lexicographic order is a common choice, employed in the numerical experiments reported in this manuscript, but other orderings exist and can be employed as well.}}.
	The size of $\mat{A}^{(k)}$ is $n_k \times n_{\ne k}$.\\
	The unfolding operation can be extended to a set of modes. Let $\set{S} = \{m_1, \dots, m_k\}\subset [d]$, the matricization of $\ten{A}$ along the modes in $\set{S}$ produces a matrix, $\mat{A}^{({\set{S}})}$, such that
	\[
	\mat{A}^{({\set{S}})}(i, j) = \ten{A}(i_1, \dots, i_d),
	\]
	where $i = \mathbb{L}(i_{m_1}, \dots, i_{m_k})$ and $j = \mathbb{L}(i_{\ell_1}, \dots, i_{\ell_h})$ with $\{\ell_1, \dots, \ell_h	
	\} = [d] - \set{S}$.
	The size of $\mat{A}^{({\set{S}})}$ is $n_{\set{S}}\times n_{\notin {\set{S}}}$ where $n_{\set{S}}= \prod_{k\in\set{S}}n_k$ and $n_{\notin {\set{S}}} = (n_1\cdots n_d)/n_{\set{S}}$.
\end{definition}

\begin{definition}
	The Tensor-Times-Matrix (TTM) product of $\ten{A}\in\R^{\vec{n}}$ of order $d$ and $\mat{U}\in\R^{m \times n_k}$ along mode $k$ is denoted by $	\ten{A} \times_k \mat{U}$,
	and it results in an order-$d$ tensor $\ten{B}$ of size $(n_1, \dots,n_{k-1}, m, n_{k+1}, \dots, n_d)$ such that
	\[
	\ten{B}(i_1, \dots, i_{k-1}, j, i_{k+1}, \dots, i_d) = \sum_{{i_k} = 1}^{n_k}\ten{A}(\vec{i})\mat{U}(j,i_k).
	\]	
\end{definition}
Note that the TTM product along mode-$k$ can be equivalently computed using the unfolding along mode $k$ as 
$\Bigl(\ten{A}\times_k \mat{U}\Bigr)^{(k)} = \mat{U}\mat{A}^{(k)}$.
Finally, when we want to write the TTM product of $\ten{A}$ along each mode $k$ with all $\mat{U}_k$'s for $k=1, \dots, d$, we will write 
$(\mat{U}_1,\dots, \mat{U}_d)\ten{A}:=\ten{A}\times_1\mat{U}_1\times_2\cdots\times_d\mat{U}_d$.

\subsection{Randomized range-finder}\label{RandSVD}
The randomized range-finder algorithm~\cite{Halko2011rHOSVD} is a foundational tool in randomized numerical linear algebra. The main feature behind the success of this scheme relies on the fact that the column space of a given matrix $\mat{A}\in\R^{m\times n}$, $m\geq n$, can often be well approximated by taking a moderate number of linear combinations of its columns where the coefficients of such combinations are chosen randomly. More precisely, given $\mat{\Omega}\in\R^{n\times (k+p)}$ where $k$ is a target rank and $p$ an oversampling parameter, we have $\text{Range}(\mat{A\Omega})\approx\text{Range}(\mat{A})$. The quality of such approximation is mainly driven by the singular value distribution of $\mat{A}$ and the nature of $\mat{\Omega}$. For instance, for a Gaussian $\mat{\Omega}$, if $\mat{Q}$ denotes the orthogonal factor of the skinny QR factorization of $\mat{A\Omega}$ and $\sigma_i$ is the $i$th singular value of $\mat{A}$, then
\begin{equation}\label{eq:error_rangefinder}
    \mathbb{E}\left[\|\mat{A}-\mat{QQ}^T\mat{A}\|_F\right]\leq \left(1+\frac{k}{p-1}\right)^{1/2}\sqrt{\sum_{i=k+1}^n\sigma_i^2};
\end{equation}
see, e.g.,~\cite{Halko2011rHOSVD}.
With the matrix $\mat{Q}$ at hand, one can then compute different quantities of interest. The most prominent one is probably the SVD-like approximation of $\mat{A}$ given by the so-called randomized SVD. This simply consists \textcolor{black}{of} computing the standard SVD of $\mat{Q}^T\mat{A}=\mat{U\Sigma V}^T$ so that $\mat{A}\approx (\mat{QU})\mat{\Sigma V}^T$; see, e.g.,~\cite{Halko2011rHOSVD}.

In this paper we employ the randomized range-finder scheme, along with a preliminary fiber sampling step, and provide error estimates on the same lines of~\eqref{eq:error_rangefinder}.

\section{The Tucker decomposition}\label{sec:prelim:tucker}

The Tucker decomposition~\cite{Tucker1966} can be seen as a compression technique for tensors which relies on the concept of multilinear rank. The multilinear rank, $\vec{r}=(r_1,\ldots,r_d)^T\in\N^{d}$ of $\ten{X}\in\R^{\vec{n}}$ of order $d$ is such that
$r_k = {\rm rank}(\mat{X}^{(k)})$ for $k = 1, \dots, d$. The  Tucker decomposition factorizes $\ten{X}$ as the product of a smaller order-$d$ tensor, $\ten{S}\in\R^{\vec{r}}$ called \emph{core tensor}, and $d$ \emph{factor matrices}, $\mat{U}_k\in\R^{n_k\times r_k}$, that is
\begin{equation}
   \label{eq:Tucker}
\ten{X} = (\mat{U}_1, \dots, \mat{U}_d)\ten{S}. 
\end{equation}
{Henceforth, by \textit{low-rank} we will indicate tensors that can be factorized as in~\eqref{eq:Tucker} in a non-trivial way, with multilinear rank smaller than the tensor size.} 

Among the several available algorithms for computing the Tucker decomposition of a tensor, two of the most widely used are the High-Order SVD (HOSVD)~\cite{Lathauwer2000HOSVD} and the Sequentially Truncated HOSVD (ST-HOSVD)~\cite{Vannieuwenhoven2012STHOSVD}. 
The HOSVD finds the Tucker factor matrix $\mat{U}_k$ of $\ten{X}$ as the left orthogonal factor of the SVD of $\mat{X}^{(k)}$, for every mode $k=1,\dots, d$. The core tensor is then computed as $\ten{S} = (\mat{U}^T_1, \dots, \mat{U}^T_d)\ten{X}$.


To further compress the information of a tensor, we can compute the Tucker approximation of $\ten{X}$ at multilinear rank $\vec{t}=(t_1,\ldots,t_d)^T$ such that $t_k< r_k$ for all $k=1,\ldots,d$.
If we look for an approximation with a given multilinear rank $\vec{t}$, the truncated SVD replaces the SVD, that is we keep only the first $t_k$ left singular vectors of $\mat{X}^{(k)}$. This variant is called Truncated HOSVD (T-HOSVD). The T-HOSVD is quasi-optimal, that is\textcolor{black}{,} if $\ten{X}^\star$ is the best approximation of $\ten{X}$ at multilinear rank $\vec{t}$ and $\widehat{\ten{X}}$ is the approximation produced by the T-HOSVD, then
\[
	\|{\ten{X} - \widehat{\ten{X}}}\|_{F} \le \sqrt{d}\norm{\ten{X} - \ten{X}^\star}_{F},
\]
where $d$ is the order of $\ten{X}$. In other words, the T-HOSVD approximation has at most distance $\sqrt{d}$ from the best approximation at multilinear rank $\vec{t}$, as proven in~\cite{Vannieuwenhoven2012STHOSVD}. 

From a complexity point of view, the $d$ SVDs dominate the computational cost of the (T-)HOSVD algorithm. More precisely, this complexity  can be estimated with 
$\mathcal{O}\bigl(\prod_{j=1}^d n_j\sum_{k=1}^d\min\{n_k, n_{\ne k}\}\bigr)$ flops. Assuming that all the $d$ modes have size $n$, the computational cost of the (T-)HOSVD reduces to $\mathcal{O}(dn^{d+1})$ flops.
%
Variants of the HOSVD have been developed to reduce these computational costs, either deterministic, e.g.\textcolor{black}{,} the ST-HOSVD~\cite{Vannieuwenhoven2012STHOSVD}\textcolor{black}{,} or randomized~\cite{Ahmadi2021ReviewRandTucker}. The ST-HOSVD alternates the computation of each Tucker factor $\mat{U}_k$ with the TTM product along the corresponding mode, resulting in a distribution of the TTM steps to form the core tensor. 
This minimal modification significantly reduces the computational costs to $\mathcal{O}\bigl(\sum_{k=1}^d n_kr_{< k} n_{>k}\min\{n_k, r_{< k}n_{>k}\}\bigr)$ where $r_{< k} = \prod_{j=1}^{k-1}r_j $ and $n_{>k} = \prod_{j=k+1}^{d}n_j$. This complexity reduces to $\mathcal{O}\bigl(\sum_{k=1}^d n^{d-k + 2}r^{k-1}\bigr)$ assuming that all components of the size and the multilinear rank are equal, and $n = \min\{n, r^{k-1}n^{d-k}\}$. Finally, recalling that $r \le n$, we can estimate the computational complexity of the SVDs in the ST-HOSVD to {\color{black}amount} to $\mathcal{O}(n^{d+1})$. On the other hand, ST-HOSVD has an intrinsic serial nature not suitable for parallelization.

A first randomized variant of the HOSVD (R-HOSVD) replaces the deterministic SVD of the tensor unfolding in the (T-)HOSVD with the randomized SVD for each mode. In~\cite{Minster2020RHOSVD}, the authors discuss possible choices for the random matrices $\{\mat{\Omega}_k\}_k$ and they prove in~\cite[Theorem 3.1]{Minster2020RHOSVD} that if $\widehat{\ten{X}}$ is the approximation of $\ten{X}$ at multilinear rank {\color{black}$\vec{r}=(r_1,\ldots,r_d)^T$} generated by the R-HOSVD with Gaussian matrices $\{\mat{\Omega}_k\}_k$ and oversampling parameter $p \ge 2$ such that $r_k + p \le \min\{n_k, n_{\ne k}\}$, then
\begin{equation}\label{eq:errorbound_RHOSVD}
    	\E \left [ \| \ten{X} - \widehat{\ten{X}} \|_F \right ] \le \biggl(d + \frac{\sum_{k=1}^d r_k}{p-1}\biggr)^{\frac{1}{2}}\norm{\ten{X} - \ten{X}^\star}_F,
\end{equation}
where $\ten{X}^\star$ is the best approximation of $\ten{X}$ at multilinear rank $\vec{r}$.
The computational cost of the $d$ randomized SVDs reduces to $\mathcal{O}(\sum_{k=1}^d r_k \prod_{j=1}^{d}n_j)$ flops, that is ${\cal O}(drn^d)$ assuming that all the multilinear rank and the mode size components are equal to $r$ and $n$, respectively. Therefore, the R-HOSVD is able to reduce the main computational cost of the overall procedure by one order of magnitude assuming $r\ll n$. On the other hand, the memory requirements of HOSVD and R-HOSVD remain the same due to the allocation of the matricizations $\mat{X}^{(k)}$.


\subsection{Subsampled randomized HOSVD}\label{sec:sr-HOSVD}
In this section we present one of the most important contribution\textcolor{black}{s} of this paper. Our goal is to further reduce the complexity of computing the Tucker decomposition with a given multilinear rank {\color{black}$\vec{r}=(r_1,\ldots,r_d)^T$} in terms of both flops and storage demand.

Let \textcolor{black}{us} assume, for the time being, that $r_k=\text{rank}(\mat{X}^{(k)})$ and no truncation is performed. In this case, the
columns of the factor $\mat{U}_k$ amount to an orthogonal basis for the column space of a very fat matrix, $\mat{X}^{(k)}\in\R^{n_k\times n_{\neq k}}$. In general, $n_k\ll n_{\neq k}$, so that the column space of $\mat{X}^{(k)}$ has dimension at most $n_k$ and, in most cases, we do not need all the $n_{\neq k}$ columns of $\mat{X}^{(k)}$ to construct a basis for $\text{Range}(\mat{X}^{(k)})$. This is the reasoning behind the subsampling step we introduce in our procedure. In place of constructing an orthonormal basis for $\text{Range}(\mat{X}^{(k)})$, we construct a basis for $\text{Range}(\mat{X}^{(k)}\mat{E})$ where $\mat{E}\in\R^{n_{\neq k}\times s_k}$ contains on its columns $s_k$ randomly chosen\footnote{{\color{black}With no repetition.}} vectors of the canonical basis of $\R^{n_{\neq k}}$ for a given $s_k\geq n_k$. Clearly, if among the $s_k$ columns of $\mat{X}^{(k)}\mat{E}$,
$\text{rank}(\mat{X}^{(k)})$ of them turn out to be linearly independent, then $\text{Range}(\mat{X}^{(k)})=\text{Range}(\mat{X}^{(k)}\mat{E})$.

The construction of $\mat{Y}_k:=\mat{X}^{(k)}\mat{E}\in\R^{n_k\times s_k}$ faces a couple of challenges. The first one is computational. In particular, we want to avoid at all the construction of any matricization of $\ten{X}$. This means that we are not allowed to form $\mat{X}^{(k)}$ and then extract some of its columns to form $\mat{Y}_k$. A careful handling of index transformations does the job so that we can directly sample fibers of $\ten{X}$ - that would correspond to columns of $\mat{X}^{(k)}$ - to construct $\mat{Y}_k$. This fiber sampling step remarkably impacts the performance of our algorithm. First, {\color{black} in contrast to what happens in HOSVD,} we avoid the extremely time-consuming operation of rearranging the entries of $\ten{X}$ into the matrix $\mat{X}^{(k)}$. Second, the explicit construction of $\mat{X}^{(k)}$ requires \textcolor{black}{copying} all the $\prod_{k=1}^dn_k$ entries of $\ten{X}$; a tremendous cost in terms of storage allocation. The construction of $\mat{Y}_k$, on the other hand, requires the storage of $s_k$ fibers of $\ten{X}$ and since $s_k$ can often be chosen much smaller than $n_{\neq k}$, storing $\mat{Y}_k$ turns out to be significantly cheaper than storing $\mat{X}^{(k)}$. This fact is crucial for achieving a real mode-parallel implementation of the Tucker decomposition. Indeed, assuming we have $d$ computational nodes at our disposal, it is very unlikely that each of them has the resources to store a full copy of $\ten{X}$, namely the matricization $\mat{X}^{(k)}$. Thanks to our subsampling step, we no longer need to assemble $\mat{X}^{(k)}$ on the $k$-th computational node but rather $\mat{Y}_k$. As shown in our numerical results in section~\ref{sec:tucker:exp}, this operation is often affordable\textcolor{black}{,} leading to the realistic possibility of running a mode-parallel implementation of the Tucker decomposition.

The second challenge we need to face when we perform a subsampling step is clearly how to choose the \emph{right} columns of $\mat{X}^{(k)}$ to form $\mat{Y}_k=\mat{X}^{(k)}\mat{E}$. Many selection strategies to sample sensible columns of matrices have been proposed in the literature; see, e.g.,~\cite{Stewart99,Cortinovisetal2020,Thurauetal,Bou2017,Drineas2008,Wang2013}. However, computing the measures providing the best column indices is very expensive, with a cost that is often comparable to the one of computing the SVD of $\mat{X}^{(k)}$. We thus refrain from employing any of these techniques and we simply pick $s_k$ indices in $[n_{\neq k}]$ uniformly at random {\color{black}with no repetition}. It is easy to construct counterexamples where this naive strategy fails, namely, in our context, $\text{Range}(\mat{X}^{(k)}\mat{E})$ does not contain any sensible information coming from $\text{Range}(\mat{X}^{(k)})$. On the other hand, in section~\ref{sec:errbound} we picture the right theoretical frame where our \textcolor{black}{subsampling} step succeeds\textcolor{black}{,} and in section~\ref{sec:exp:realworld} \textcolor{black}{and in the supplementary materials} we report a series of real-world examples where the assumptions we draw are indeed satisfied.

Once we have $\mat{Y}_k=\mat{X}^{(k)}\mat{E}$ at hand, we compute an approximation to $\text{Range}(\mat{Y}_k)$ by the range-finder algorithm recalled in section~\ref{RandSVD}. Indeed, our initial goal was to construct a sensible subspace of dimension $r_k\leq n_k$ able to well approximate $\text{Range}(\mat{X}^{(k)})$. The $s_k\geq n_k$ columns $\mat{Y}_k$ \textcolor{black}{may not all be needed} to this end. Therefore, we rather draw a Gaussian matrix $\mat{\Omega}_k\in\R^{s_k\times (r_k+p)}$ for a given oversampling parameter $p$, and set {\color{black}$\mat{U}_k=\mat{Q}_k\mat{W}_k$ where $\mat{Q}_k\in\mathbb{R}^{n_k\times (r_k+p)}$ is the orthogonal factor of the skinny QR of $\mat{Y}_k\mat{\Omega}_k$ whereas $\mat{W}_k\in\mathbb{R}^{(r_k+p)\times r_k}$ collects the dominant $r_k$ left singular vectors of $\mat{Q}_k^T\mat{Y}_k$}. Notice that the construction of a much smaller Gaussian matrix in our setting -- $\mat{\Omega}_k$ is $s_k\times (r_k+p)$ instead of $n_{\ne k}\times (r_k+p)$ as in R-HOSVD -- is another important feature of our scheme. Indeed, even if it is linear in the dimension of $\mat{\Omega}_k$, the cost of drawing a Gaussian matrix should not get overlooked, especially in a tensor setting where $n_{\ne k}$ grows exponentially with the number of modes $d$.

The reasoning behind our overall approach can thus be summarized as follows\textcolor{black}{:}
$$\text{Range}(\mat{U}_k)\underset{\substack{\\\text{Range Finder}}}{\approx} \text{Range}(\mat{X}^{(k)}\mat{E}) \underset{\substack{\\\text{Subsampling}}}{\approx} \text{Range}(\mat{X}^{(k)}),$$
and the related routine, named the Sub-R-HOSVD, is shown in Algorithm~\ref{alg:Sub-R-HOSVD}. 
Its computational cost is $\mathcal{O}(\sum_{k=1}^d n_ks_kr_k+n_kr_k^2)$, which, assuming $r_k=r, s_k=s$, and $n_k=n$ for all modes, simplifies to ${\cal O}(d ( n s r +n r^2))$ which leads to significant computational gains for {\color{black}$s\ll n^{d-1}$}. \textcolor{black}{Notice that the complexity of the core tensor computation is omitted, since this cost remains the same for all the deterministic and randomized variants of the HOSVD.} The main memory usage of Algorithm~\ref{alg:Sub-R-HOSVD} is given by the storage of the matrix $\mat{Y}_k$ that amounts to $s_k$ vectors of length $n_k$, again way less than $\prod_{i=1}^dn_i$ for $s_k\ll n_{\neq k}$.

\begin{algorithm}[t!]
	\centering
	\caption{$[\ten{S}\,, \{\mat{U}_k\}]$ = Sub-R-HOSVD($\ten{X}$, $\vec{r}$, $p$, $\vec{s}$)}\label{alg:Sub-R-HOSVD}
	\begin{algorithmic}[1]
		\Input{$\ten{X}$ an order-$d$ tensor of size $\vec{n}$, and a prescribed multilinear rank $\vec{r}$, oversampling parameter $p \ge 4$ and multilinear sampling parameter $\vec{s}$}
		\Output{\small$\ten{S}$ the core order-$d$ tensor of size $\vec{r}$, and the factor matrices $\{\mat{U}_k\}_k$}
		\For{$k = 1, \ldots, d$}
		\State Randomly pick $s_k$ mode-$k$ fibers of $\ten{X}$ and store them into the matrix $\mat{Y}_k \in \R^{n_k \times s_k}$
        \State Generate a Gaussian sketch matrix $\mat{\Omega}_k \in \R^{s_k \times (r_k+p)}$
		\State Compute an orthonormal basis {\color{black}$\mat{Q}_k \in \R^{n_k \times (r_k+p)}$} of $\text{Range}(\mat{Y}_k \mat{\Omega}_k)$
        \State {\color{black}Compute the dominant $r_k$ left singular vectors of $\mat{Q}_k^T\mat{Y}_k$ and collect them in $\mat{W}_k$}
        \State {\color{black}Set $\mat{U}_k=\mat{Q}_k\mat{W}_k \in \R^{n_k \times r_k}$}
		\EndFor
		\State Compute $\ten{S}$ as $(\mat{U}^T_1, \dots, \mat{U}^T_d)\ten{X}$\label{alg:subRHOSVD_computecore}
	\end{algorithmic}
\end{algorithm}

\subsection{Error bounds}
\label{sec:errbound}
In this section we show that the Tucker decomposition computed by Algorithm~\ref{alg:Sub-R-HOSVD} fulfills error bounds similar to the one in~\eqref{eq:errorbound_RHOSVD}. To this end, we first work mode by mode, showing that the subsampling of the column of $\mat{X}^{(k)}$ followed by a range\textcolor{black}{-}finding step leads to the construction of good approximations of $\text{Range}(\mat{X}^{(k)})$. Then, we put these error bounds together to estimate {\color{black}$\E[\|\ten{X}- \ten{\widehat X}\|_F]$} where $ \ten{\widehat X}$ is the approximation given by Algorithm~\ref{alg:Sub-R-HOSVD} whereas $\ten{X}$ is the original tensor.

By taking inspiration from the results in~\cite{palitta2024}, our analysis starts with studying the effects of the subsampling step on a fat matrix $\mat{X}\in\R^{n\times m}$ with $m\gg n$. This matrix plays the role of the matricizations $\mat{X}^{(k)}$. 
We first partition $\mat{X}$ as follows
\begin{equation}\label{partition}
    \begin{array}{cccc}
     & \displaystyle r \quad n-r& m   \\
     \mat{X} = \mat{U}&   {\begin{bmatrix} \mat{\Sigma}_{1} \\ & \mat{\Sigma}_{2} \end{bmatrix}} & {\begin{bmatrix} \mat{V}_{1}^T \\ \mat{V}_{2}^T \end{bmatrix}}   & {\begin{matrix} r  \\ n-r \end{matrix}}
\end{array}
\end{equation}
where $r$ will be the $k$-th component $r_k$ of the target multilinear rank $\vec{r}$ in Algorithm~\ref{alg:Sub-R-HOSVD}.

In the next lemma we present some criteria for choosing the subsampling parameter $s$ such that $\mat{XE}$ preserves some good properties, where $\mat{E}\in\R^{m\times s}$ denotes the sampling matrix. These criteria are given in terms of the \emph{coherence} of the row space of $\mat{X}$. In general, given a subspace $\mathcal{V} \subseteq \R^m$ and an orthogonal projector $\mat{P}_{\mathcal{V}}=\mat{VV}^T$ onto $\mathcal{V}$, the coherence of $\mathcal{V}$ is defined as
$$
\mu(\mathcal{V}):=\max_{i=1,\ldots,m}\|\mat{P}_{\mathcal{V}}\vec{e}_i\|_2^2=\max_{i=1,\ldots,m}\|\mat{V}^T\vec{e}_i\|_2^2,
$$
where $\vec{e}_i\in\R^m$ denotes the $i$-th canonical basis vector of $\R^m$. Following the partition in~\eqref{partition}, we are interested in the coherence of $\text{Range}(\mat{V}_1)$ and $\text{Range}(\mat{V}_2)$ that, for the sake of brevity and with abuse of notation, we will denote as $\mu(\mat{V}_1)$ and $\mu(\mat{V}_2)$, respectively.

\begin{lemma}\label{lemma_fullrank}
    Let $\mat{X}=\mat{U\Sigma V}^T$ with $\mat{V}=[\mat{V}_1,\mat{V}_2]$, $\mat{V}_1\in \R^{m \times r}$, $\mat{V}_2\in\R^{m \times (n-r)}$ as in~\eqref{partition}. Define the quantities $M_i := m\cdot\mu(\mat{V}_i)$, $i=1,2$. Select the sample size $s \ge \max \{\alpha_1 M_1 \log (r),\alpha_2 M_2 \log (n-r) \}$, with $\alpha_1, \alpha_2 \ge 0$. Then, for $\delta \in [0,1)$ and $\eta \in [0,m/s-1]$, it holds
    \begin{equation}\label{eq:Lemma3.1}
        \sigma_r(\mat{V}_1^T \mat{E}) \ge \sqrt{\frac{(1-\delta) s}{m}} \qquad \text{and} \qquad \sigma_1(\mat{V}_2^T \mat{E}) \leq \sqrt{\frac{(1+\eta) s}{m}},
    \end{equation}
    with failure probability at most $r \cdot \left ( \frac{\mathtt e^{-\delta}}{(1-\delta)^{1-\delta}} \right )^{\alpha_1 \log (r)} + (n-r) \cdot \left ( \frac{\mathtt e^{\eta}}{(1+\eta)^{1+\eta}} \right )^{\alpha_2 \log (n-r)}$. 
\end{lemma}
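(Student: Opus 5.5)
This is a direct application of the matrix Chernoff inequality for sampling without replacement (Tropp, "Improved analysis of the subsampled randomized Hadamard transform", Lemma 3.4, as used in~\cite{palitta2024}). We treat the two factors separately and combine them with a union bound.

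\textbf{Setup.} Write $\mat{V}_i^T\mat{E}$ as the matrix whose columns are the sampled rows of $\mat{V}_i$, i.e.\ the vectors $\mat{V}_i^T\vec{e}_j$ for the $s$ chosen indices $j$. Then
\[
(\mat{V}_i^T\mat{E})(\mat{V}_i^T\mat{E})^T=\sum_{j\in\mathcal{J}}\mat{V}_i^T\vec{e}_j\vec{e}_j^T\mat{V}_i ,
\]
where $\mathcal{J}$ is drawn uniformly without replacement from $[m]$. This is a sum of $s$ positive semidefinite matrices sampled without replacement from the finite family $\{\mat{V}_i^T\vec{e}_j\vec{e}_j^T\mat{V}_i\}_{j=1}^m$.

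\textbf{Parameters for the Chernoff bound.} Each summand satisfies $\lambda_{\max}(\mat{V}_i^T\vec{e}_j\vec{e}_j^T\mat{V}_i)=\|\mat{V}_i^T\vec{e}_j\|_2^2\le\mu(\mat{V}_i)=M_i/m=:L_i$. The family sums to $\mat{V}_i^T\mat{V}_i=\mat{I}$. Hence, for a single uniform draw, the expectation is $\mat{I}/m$, and the sum of $s$ draws has
\[
\mu_{\min}=\mu_{\max}=s/m .
\]
Tropp's lemma (which extends to sampling without replacement via Gross--Nesme/Hoeffding) gives
\[
\mathbb{P}\{\lambda_{\min}\le(1-\delta)\mu_{\min}\}\le \dim\cdot\bigl(\mathtt e^{-\delta}(1-\delta)^{-(1-\delta)}\bigr)^{\mu_{\min}/L},
\]
\[
\mathbb{P}\{\lambda_{\max}\ge(1+\eta)\mu_{\max}\}\le \dim\cdot\bigl(\mathtt e^{\eta}(1+\eta)^{-(1+\eta)}\bigr)^{\mu_{\max}/L}.
\]

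\textbf{Applying it to each block.}
\begin{itemize}
\item For $i=1$ the dimension is $r$, and $\mu_{\min}/L_1=s/M_1\ge\alpha_1\log r$ by the assumption on $s$. Since the base $\mathtt e^{-\delta}(1-\delta)^{-(1-\delta)}$ is at most $1$, increasing the exponent only decreases the bound, so we may replace $s/M_1$ by $\alpha_1\log r$.
\item For $i=2$ the dimension is $n-r$, and $s/M_2\ge\alpha_2\log(n-r)$; the same monotonicity argument applies, since $\mathtt e^{\eta}(1+\eta)^{-(1+\eta)}\le1$.
\end{itemize}
Converting eigenvalues to singular values, $\sigma_r(\mat{V}_1^T\mat{E})^2=\lambda_{\min}$ and $\sigma_1(\mat{V}_2^T\mat{E})^2=\lambda_{\max}$. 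Taking square roots gives the two inequalities in~\eqref{eq:Lemma3.1}, and a union bound adds the two failure probabilities.

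\textbf{Remaining checks and main obstacle.}
\begin{itemize}
\item The restriction $\eta\le m/s-1$ is only needed so that $(1+\eta)s/m\le1$. This is consistent with the trivial bound $\sigma_1(\mat{V}_2^T\mat{E})\le1$, and the Chernoff bound requires nothing more.
\item The degenerate cases $r=1$ (where $\log r=0$ and the bound is trivial) and $\alpha_i=0$ need only a remark.
\item The main obstacle is justifying the Chernoff bound for sampling \emph{without} replacement. I would cite Tropp's Lemma 3.4 directly, rather than reprove it through the Hoeffding convex-ordering argument that transfers the with-replacement bound.
\end{itemize}
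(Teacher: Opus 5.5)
Your proposal is correct and follows the paper's route. Like the paper, you apply Tropp's row-sampling result separately to $\mat{V}_1$ (lower tail, dimension $r$) and to $\mat{V}_2$ (upper tail, dimension $n-r$), combine the two with a union bound, and note that the restriction $\eta\le m/s-1$ only reflects $\sigma_1(\mat{V}_2^T\mat{E})\le 1$. The one difference is that you unwind Lemma~3.4 into the underlying without-replacement matrix Chernoff inequality (the bounds you display are Theorem~2.2 of Tropp's paper, of which Lemma~3.4 is a consequence). This has the small benefit of making explicit that each one-sided failure probability can be used on its own.
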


\begin{proof}
The first part directly comes from applying~\cite[Lemma 3.4]{Tropp2011} twice: in the first case, we consider only the lower bound for the {\color{black}$r$}-th singular value of $\mat{V}_1^T \mat{E}$, while, in the second case, we use only the upper bound for the first singular value of $\mat{V}_2^T \mat{E}$. The overall failure probability of these two events follows from a simple union bound. Moreover, the upper bound on $\eta$, namely $\eta\leq m/s-1$ comes from simply noticing that $\sigma_1(\mat{V}_2^T \mat{E} )\leq 1$ always holds true, regardless of $s$.
\end{proof}

Lemma~\ref{lemma_fullrank} shows that if the subsampling parameter $s$ is sufficiently large, then, with high probability, the subsampled right singular matrix $\mat{V}_1^T\mat{E}$ is full rank; this is crucial for showing the effectiveness of the range finding step, as shown next. 
Moreover, it also provides the following bound
\begin{equation}\label{bound_frobenius_norm_UT1_E}
    \| (\mat{V}_1^T \mat{E})^{\dagger} \|_F^2 = \sum_{i=1}^r\frac{1}{\sigma_i(\mat{V}_1^T \mat{E})^2}\leq \frac{r}{\sigma_r(\mat{V}_1^T \mat{E})^2} \leq \frac{r \cdot m}{(1-\delta) s},
\end{equation}
which holds with failure probability at most $r \cdot \left ( \frac{\mathtt e^{-\delta}}{(1-\delta)^{1-\delta}} \right )^{\alpha_1 \log (r)}$, if $s \ge \alpha_1 M_1 \log (r)$.


Before continuing, let us discuss first the meaning of the subsampling parameter $s$ being \emph{sufficiently large}. As shown in Lemma~\ref{lemma_fullrank}, this must be chosen in terms of coherence as it is customary in subsampling techniques; see, e.g.,~\cite{Ipsen_coherence}. {\color{black} For a matrix $\mat{X}\in\mathbb{R}^{n\times m}$ with low coherence, we can select moderate sampling parameters $s$, much smaller than $m$, without jeopardizing the likelihood of~\eqref{eq:Lemma3.1}. On the other hand,}
we must mention that it is easy to construct examples where a subsampling  step fails, namely $\mat{XE}$ retains information of $\mat{X}\in\R^{n\times m}$ only for extremely large $s\approx m$. This is the case, e.g\textcolor{black}{.}, when $\mu(\mat{V})=1$. In these scenarios, the subsampling step does not lead to any computational gain. Our selection of $s$, however, is in terms of $\mu(\mat{V}_1)$ and $\mu(\mat{V}_2)$ which are in general smaller than $\mu(\mat{V})$.
Moreover, in certain practical situations one could expect $\mu(\mat{V})$ to be small by, e.g., having prior knowledge about the application setting of interest. Indeed, $\mu(\mat{V})$ may have some proper physical meaning. For instance, when clustering objects from partial observations, the coherence is a function of the minimum cluster size~\cite{Coherence1}. Similarly, in recovering spectrally sparse signals, low coherence means that the supporting frequencies are spread out; a natural assumption in this framework~\cite{Coherence2}. We also want to stress that the concept of coherence is not strictly related to that of sparsity. In particular, a very sparse matrix does not \textcolor{black}{necessarily need} to have a large coherence. For instance, in~\cite{palitta2024} a panel of results shows that the sampling step does not lead to any accuracy deterioration even in \textcolor{black}{the} case of extremely sparse matrices. Similarly,
in section~\ref{sec:tucker:exp} we show that for tensors collecting application-driven data, performing a subsampling step does not cause any loss of accuracy, also for modest subsampling parameters $s$. This suggests that we can indeed expect {\color{black}many} real-world datasets to have low coherence.

We now proceed with our error analysis presenting the following lemma.

\begin{lemma}\label{lemma_fullrankOmega}
Consider the same assumptions and notation of Lemma~\ref{lemma_fullrank}. Moreover, let $\mat{\Omega}\in\R^{s\times(r+p)}$ be a Gaussian matrix.
Then, $\mat{V}_1^T \mat{E} \mat{\Omega}$ has full rank with probability $1-r \cdot \left ( \frac{\mathtt e^{-\delta}}{(1-\delta)^{1-\delta}} \right )^{\alpha_1 \log (r)}$.
\end{lemma}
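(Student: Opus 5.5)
The plan is to condition on the subsampling event and then use the rotational invariance of the Gaussian sketch. Condition on the outcome of the random column selection $\mat{E}$, which is independent of $\mat{\Omega}$. By Lemma~\ref{lemma_fullrank}, with failure probability at most $r \cdot \left ( \frac{\mathtt e^{-\delta}}{(1-\delta)^{1-\delta}} \right )^{\alpha_1 \log (r)}$ we have $\sigma_r(\mat{V}_1^T\mat{E}) \ge \sqrt{(1-\delta)s/m} > 0$ (since $\delta<1$). In that case the $r\times s$ matrix $\mat{A}:=\mat{V}_1^T\mat{E}$ has full row rank $r$. Only the first event of Lemma~\ref{lemma_fullrank} is used, which is why only the first term of the failure probability appears.

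Fix such an $\mat{E}$ and write the thin SVD $\mat{A}=\mat{P}\mat{D}\mat{Z}^T$. Here $\mat{P}\in\R^{r\times r}$ is orthogonal, $\mat{D}$ is diagonal and invertible, and $\mat{Z}\in\R^{s\times r}$ has orthonormal columns. Then $\mat{A}\mat{\Omega}=\mat{P}\mat{D}(\mat{Z}^T\mat{\Omega})$. By rotational invariance of the standard Gaussian distribution, $\mat{G}:=\mat{Z}^T\mat{\Omega}\in\R^{r\times(r+p)}$ is itself a standard Gaussian matrix. An $r\times(r+p)$ Gaussian matrix with $r+p\ge r$ has rank $r$ with probability one. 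This holds because the set of rank-deficient matrices is the zero set of a nontrivial polynomial, namely the determinant of the leading $r\times r$ block, so it has Lebesgue measure zero. Since $\mat{P}\mat{D}$ is invertible, $\text{rank}(\mat{A}\mat{\Omega})=\text{rank}(\mat{G})=r$ almost surely, conditionally on $\mat{E}$.

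Finally, combine the two steps: the probability that $\mat{V}_1^T\mat{E}\mat{\Omega}$ fails to have full rank is at most the failure probability of the subsampling event plus a null event. This gives the stated success probability $1-r \cdot \left ( \frac{\mathtt e^{-\delta}}{(1-\delta)^{1-\delta}} \right )^{\alpha_1 \log (r)}$, with the implicit requirement $s\ge\alpha_1M_1\log(r)$ carried over from Lemma~\ref{lemma_fullrank}. The only delicate point is the conditioning argument, that is, justifying that $\mat{\Omega}$ is independent of $\mat{E}$ so that the Gaussian invariance can be applied for each fixed $\mat{E}$. This follows from how the algorithm draws the two matrices separately, and the rest is routine.
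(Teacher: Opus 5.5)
Your proposal is correct and follows the paper's argument: take full row rank of $\mat{V}_1^T\mat{E}$ from the first half of Lemma~\ref{lemma_fullrank}, write the SVD $\mat{V}_1^T\mat{E}=\mat{P}\mat{D}\mat{Z}^T$, and use rotational invariance to see that $\mat{Z}^T\mat{\Omega}$ is Gaussian, hence (almost surely) of full rank. You are more careful than the paper in making the conditioning on $\mat{E}$ and the almost-sure full rank of the $r\times(r+p)$ Gaussian factor explicit; one small wording fix is that the rank-deficient set is contained in, not equal to, the zero set of the leading minor, which is all you need.
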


\begin{proof}
Under the assumptions of Lemma~\ref{lemma_fullrank}, $\mat{V}_1^T \mat{E}\in\R^{r\times s}$ is full rank with high probability so that if $\mat{W} \mat{S} \mat{Z}^T$ denotes its SVD, then the matrix $\mat{Z} \in \R^{s \times r}$ has full rank with the same probability, i.e. $1- r \cdot \left ( \frac{\mathtt e^{-\delta}}{(1-\delta)^{1-\delta}} \right )^{\alpha_1 \log (r)}$.  
Since $\mat{Z}$ has orthogonal columns, $\mat{Z}^T \mat{\Omega}$ is Gaussian and, consequently, the matrix $  \mat{V}_1^T \mat{E}\mat{\Omega}= \mat{W}  \mat{S} (\mat{Z}^T \mat{\Omega})$ is full rank with the same probability of $\mat{V}_1^T \mat{E}$ being full rank.
\end{proof}

With Lemma~\ref{lemma_fullrank} and~\ref{lemma_fullrankOmega}, we are now able to show that $\text{Range}(\mat{XE\Omega})\approx \text{Range}(\mat{X})$ for a fat matrix $\mat{X}$. This means that the operations we perform in each loop of Algorithm~\ref{alg:Sub-R-HOSVD} construct a matrix $\mat{U}_k$ whose columns span a good approximation of $\text{Range}(\mat{X}^{(k)})$.

For the sake of simplicity in the presentation of the results that follow, we define the following event
\begin{equation}\label{event_B}
    B = \left \{ \sigma_r(\mat{V}_1^T\mat{E}) \ge \sqrt{\frac{(1-\delta) s}{m}} \qquad \text{and} \qquad \sigma_1(\mat{V}_2^T\mat{E}) \leq \sqrt{\frac{(1+\eta) s}{m}} \right \},
\end{equation}
which, thanks to Lemma~\ref{lemma_fullrank}, is very likely for
 $s$ large enough.

\begin{theorem}\label{expected_error_under_B}
Suppose that $\mat{X}$ is a real $n \times m$ matrix, $n \leq m$, partitioned as in~\eqref{partition} where we fix a target rank $r \ge 2$, an oversampling parameter $p \ge 4$, such that $r + p \leq n$, and assume $\text{rank}(\mat{X}) \geq r$. Choose a subsampling parameter $s$ as in Lemma \ref{lemma_fullrank}. {\color{black}Define $\mat{U}=\mat{QW}\in\mathbb{R}^{n\times r}$ such that the columns of $\mat{Q}\in\R^{n\times ({\color{black}r}+p)}$
amount to an orthonormal basis of $\text{Range}(\mat{XE\Omega})$ with $\mat{E}\in\R^{m\times s}$ sampling matrix and $\mat{\Omega}\in\R^{s\times ({\color{black}r}+p)}$ Gaussian, whereas $\mat{W}\in\mathbb{R}^{(r+p)\times r}$ collects the dominant $r$ left singular vectors of $\mat{Q}^T\mat{X}\mat{E}$.} Then it holds
    \begin{equation}
        \E_B \left [ \| \mat{X} -{\color{black}\mat{UU}^T} \mat{X} \|_F \right ] \leq \left ( 1 +  3^{\frac{3}{2}}\frac{r \cdot (1+\eta)(r+p)}{(1-\delta)(p+1)} \right )^{\frac{1}{2}} \| \mat{\Sigma}_2 \|_F,
    \end{equation}
    where $\E_B$ is the conditional expectation under the event $B$ in \eqref{event_B}.
\end{theorem}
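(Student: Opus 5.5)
The plan is to follow the deterministic-plus-probabilistic structure of the Halko--Martinsson--Tropp analysis. The catch is that the sketch $\mat{E\Omega}$ is not Gaussian in the coordinates of $\mat{V}$, so the Gaussian part has to be isolated by hand. Write $\mat{\Omega}_1:=\mat{V}_1^T\mat{E}\mat{\Omega}\in\R^{r\times(r+p)}$ and $\mat{\Omega}_2:=\mat{V}_2^T\mat{E}\mat{\Omega}$. Under $B$, Lemma~\ref{lemma_fullrankOmega} gives that $\mat{\Omega}_1$ has full row rank almost surely.

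\textbf{Step 1 (deterministic bound).} With $\mat{P}_Y$ the orthogonal projector onto $\text{Range}(\mat{XE\Omega})=\text{Range}(\mat{Q})$, the standard argument (HMT, Thm.~9.1) gives
\[
\|(\mat{I}-\mat{P}_Y)\mat{X}\|_F^2\le \|\mat{\Sigma}_2\|_F^2+\|\mat{\Sigma}_2\mat{\Omega}_2\mat{\Omega}_1^\dagger\|_F^2 .
\]
We must also pass from $\mat{Q}$ to $\mat{U}=\mat{QW}$. Here $\mat{W}$ holds the dominant left singular vectors of $\mat{Q}^T\mat{XE}$ rather than of $\mat{Q}^T\mat{X}$. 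For this I would prove a Gu-type inequality (as for the rank-$r$ truncated randomized SVD) of the form
\[
\|\mat{X}-\mat{UU}^T\mat{X}\|_F^2\le\|\mat{\Sigma}_2\|_F^2+c\,\|\mat{\Sigma}_2\mat{\Omega}_2\mat{\Omega}_1^\dagger\|_F^2 .
\]
The idea is to compare with the rank-$r$ matrix $\mat{Q}\mat{Q}^T\mat{U}_1\mat{U}_1^T\mat{Q}\mat{Q}^T$ and use that $\mat{W}$ is optimal for $\mat{Q}^T\mat{XE}$, whose row-space information is controlled on $B$ by the two singular value bounds in~\eqref{event_B}. I expect this is where part of the factor $3^{3/2}$ and the loss of a clean ``$1$'' come from. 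This is the main obstacle. If it resists, I would first try to dominate $\|(\mat{I}-\mat{UU}^T)\mat{X}\|_F$ by $\|(\mat{I}-\mat{UU}^T)\mat{XE}\|_F$ rescaled by the $(1\pm)$ factors from~\eqref{event_B}.

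\textbf{Step 2 (isolating the Gaussian).} Take the SVD $\mat{V}_1^T\mat{E}=\mat{WSZ}^T$ from Lemma~\ref{lemma_fullrankOmega}, and complete $\mat{Z}$ to an orthogonal $[\mat{Z},\mat{Z}_\perp]$. Then $\mat{G}:=\mat{Z}^T\mat{\Omega}$ and $\mat{G}':=\mat{Z}_\perp^T\mat{\Omega}$ are independent Gaussians, and $\mat{\Omega}_1^\dagger=\mat{G}^\dagger\mat{S}^{-1}\mat{W}^T$. Splitting $\mat{V}_2^T\mat{E}=\mat{V}_2^T\mat{E}(\mat{ZZ}^T+\mat{Z}_\perp\mat{Z}_\perp^T)$ and using $\mat{GG}^\dagger=\mat{I}_r$ gives
\[
\mat{\Sigma}_2\mat{\Omega}_2\mat{\Omega}_1^\dagger=\mat{\Sigma}_2\mat{V}_2^T\mat{EZ}\,\mat{S}^{-1}\mat{W}^T+\mat{\Sigma}_2\mat{V}_2^T\mat{EZ}_\perp\,\mat{G}'\,\mat{G}^\dagger\mat{S}^{-1}\mat{W}^T .
\]
This decomposition is essential, because it removes any dependence on $s$ coming from $\|\mat{\Omega}\|$.

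\textbf{Step 3 (expectations).} Condition on $\mat{G}$. The identity $\E\|\mat{A}\mat{G}'\mat{C}\|_F^2=\|\mat{A}\|_F^2\|\mat{C}\|_F^2$ bounds the second term by
\[
\|\mat{\Sigma}_2\|_F^2\,\sigma_1(\mat{V}_2^T\mat{E})^2\,\|\mat{S}^{-1}\|_F^2\,\E\|\mat{G}^\dagger\|_2^2 .
\]
The first term is bounded by $\|\mat{\Sigma}_2\|_F^2\,\sigma_1(\mat{V}_2^T\mat{E})^2\,\|\mat{S}^{-1}\|_2^2$. On $B$ we have $\sigma_1(\mat{V}_2^T\mat{E})^2\le(1+\eta)s/m$, and by~\eqref{bound_frobenius_norm_UT1_E}, $\|\mat{S}^{-1}\|_F^2\le rm/((1-\delta)s)$. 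The factors of $s/m$ therefore cancel, leaving a bound of the form $\frac{(1+\eta)}{(1-\delta)}\big(1+r\,\E\|\mat{G}^\dagger\|_2^2\big)$.

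For the $r\times(r+p)$ Gaussian $\mat{G}$, I would integrate the HMT tail bound $\mathbb{P}\{\|\mat{G}^\dagger\|\ge \mathtt{e}\sqrt{r+p}\,t/(p+1)\}\le t^{-(p+1)}$. Together with $p\ge4$ and $r\ge2$, this gives $\E\|\mat{G}^\dagger\|^2\lesssim (r+p)/(p+1)$. The same inequalities let the additive $1$ be absorbed into $r(r+p)/(p+1)$. I would track the numerical constants from this step and the Gu-type step so that they combine to $3^{3/2}$.

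Finally, apply Jensen, $\E_B[\|\cdot\|_F]\le(\E_B\|\cdot\|_F^2)^{1/2}$, to reach the stated bound.
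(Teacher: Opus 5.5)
Your Step~1 is a genuine gap, and it cannot be closed in the form you propose. Write $\mat{H}_i=\mat{V}_i^T\mat{E\Omega}$ (your $\mat{\Omega}_i$). Once $\mat{W}$ is computed from $\mat{Q}^T\mat{XE}$, no pointwise bound $\|\mat{X}-\mat{UU}^T\mat{X}\|_F^2\le\|\mat{\Sigma}_2\|_F^2+c\,\|\mat{\Sigma}_2\mat{H}_2\mat{H}_1^\dagger\|_F^2$ holds, for any $c$.

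To see this, take $r+p=n$, $\text{rank}(\mat{X})=n$, $\sigma_r>\sigma_{r+1}$ and $s\ge n$. Then a.s.\ $\mat{QQ}^T=\mat{I}$, so $\text{Range}(\mat{U})$ is the dominant $r$-dimensional left singular subspace of $\mat{XE}$, which does not depend on $\mat{\Omega}$. Whenever $\mat{\Sigma}_1\mat{V}_1^T\mat{E}\mat{E}^T\mat{V}_2\mat{\Sigma}_2\neq 0$, this subspace differs from that of $\mat{X}$. Hence the left-hand side exceeds $\|\mat{\Sigma}_2\|_F^2$ by a positive amount independent of $\mat{\Omega}$. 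Yet at $\mat{\Omega}_0=(\mat{V}^T\mat{E})^\dagger$ one has $\mat{V}^T\mat{E}\mat{\Omega}_0=\mat{I}_n$, hence $\mat{H}_2\mat{H}_1^\dagger=0$. By continuity, $\|\mat{H}_2\mat{H}_1^\dagger\|$ is arbitrarily small on a set of positive Gaussian measure.

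The excess error from truncating with $\mat{XE}$ instead of $\mat{X}$ is therefore driven by $\mat{E}$, not $\mat{\Omega}$. Optimality of $\mat{W}$ for $\mat{Q}^T\mat{XE}$ says nothing about $\|\mat{U}^T\mat{X}\|_F$, so your comparison with $\mat{QQ}^T\mat{U}_1\mat{U}_1^T\mat{QQ}^T$ cannot work. Your fallback, applied to all of $\mat{X}$, would need a lower bound on $\sigma_n(\mat{V}^T\mat{E})$, which $B$ does not provide.

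The paper does not derive this step. It quotes the $c=1$ bound $\E_B\|\mat{X}-\mat{UU}^T\mat{X}\|_F^2\le\|\mat{\Sigma}_2\|_F^2+\E_B\|\mat{\Sigma}_2\mat{H}_2\mat{H}_1^\dagger\|_F^2$ from \cite[Theorem 9]{ZhangetAL2018}. It then decouples $\mat{H}_2$ and $\mat{H}_1^\dagger$ by Cauchy--Schwarz. The factor $3^{3/2}=\sqrt3\cdot 3$ comes entirely from the two fourth-moment bounds, not from the truncation as you expected.

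Your fallback does work if you apply it only to the leading block, and then your Step~2 splitting is exactly the tool needed. Let $[\mat{L}_1,\mat{L}_2]$ be the left singular vectors of $\mat{X}$, $\mat{B}_i=\mat{V}_i^T\mat{E}$, and $\mat{P}=\mat{UU}^T$. Let $\mat{P}_0$ be the orthogonal projector onto the rank-$r$ subspace $\text{Range}(\mat{XE\Omega}\mat{H}_1^\dagger)\subseteq\text{Range}(\mat{Q})$. Then
\begin{align*}
\|(\mat{I}-\mat{P})\mat{X}\|_F^2&\le\|\mat{\Sigma}_2\|_F^2+\|(\mat{I}-\mat{P})\mat{L}_1\mat{\Sigma}_1\|_F^2,\\
\sigma_r(\mat{B}_1)\,\|(\mat{I}-\mat{P})\mat{L}_1\mat{\Sigma}_1\|_F&\le\|(\mat{I}-\mat{P})\mat{XE}\|_F+\|\mat{\Sigma}_2\mat{B}_2\|_F\le\|(\mat{I}-\mat{P}_0)\mat{XE}\|_F+\|\mat{\Sigma}_2\mat{B}_2\|_F.
\end{align*}
The last step is the legitimate use of the optimality of $\mat{W}$ (Ky Fan within $\text{Range}(\mat{Q})$).

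Moreover, $(\mat{I}-\mat{P}_0)\mat{XE}=(\mat{I}-\mat{P}_0)\mat{L}_2\mat{\Sigma}_2(\mat{B}_2-\mat{H}_2\mat{H}_1^\dagger\mat{B}_1)$. With your $\mat{Z},\mat{Z}_\perp,\mat{G},\mat{G}'$, one has $\mat{B}_2-\mat{H}_2\mat{H}_1^\dagger\mat{B}_1=\mat{B}_2\mat{Z}_\perp(\mat{Z}_\perp^T-\mat{G}'\mat{G}^\dagger\mat{Z}^T)$. So, conditionally on $\mat{E}$, $\E\|(\mat{I}-\mat{P}_0)\mat{XE}\|_F^2\le(1+\tfrac{r}{p-1})\|\mat{\Sigma}_2\|_F^2\|\mat{B}_2\|_2^2$.

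On $B$ this yields $\E_B\|\mat{X}-\mat{UU}^T\mat{X}\|_F^2\le\bigl(1+\tfrac{2(1+\eta)}{1-\delta}(2+\tfrac{r}{p-1})\bigr)\|\mat{\Sigma}_2\|_F^2$. This is below the stated constant for $r\ge2$, $p\ge4$, and Jensen finishes.

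Your Steps~2--3 as written are also correct, and sharper than the paper's Cauchy--Schwarz/fourth-moment route. The cross term vanishes since $\E\mat{G}'=0$, and $\E\|\mat{G}^\dagger\|_2^2\le\E\|\mat{G}^\dagger\|_F^2=\tfrac{r}{p-1}$ suffices, giving $\tfrac{1+\eta}{1-\delta}(1+\tfrac{r^2}{p-1})\|\mat{\Sigma}_2\|_F^2$. But that term only closes the argument if a valid Step~1 is available.
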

\begin{proof}
We first observe that $\mat{H}_{1}=\mat{V}_1^T \mat{E} \mat{\Omega}$ has full rank conditioned to the event $B$ as shown in Lemma~\ref{lemma_fullrankOmega}. Thus, the H\"older inequality, along with~{\color{black}\cite[Theorem 9]{ZhangetAL2018}}, implies that
    \begin{equation}
        \E_B \left [ \| \mat{X} - {\color{black}\mat{UU}^T} \mat{X} \|_F \right ] \leq \left ( \E_B \left [ \| \mat{X} - {\color{black}\mat{UU}^T} \mat{X} \|_F^2 \right ] \right )^{\frac{1}{2}} \leq \left ( \| \mat{\Sigma}_2 \|_F^2 + \E_B \left [ \| \mat{\Sigma}_2 \mat{H}_{2} \mat{H}_{1}^\dagger \|_F^2 \right ] \right )^{\frac{1}{2}},
    \end{equation}
    where $\mat{H}_{2}=\mat{V}_2^T \mat{E} \mat{\Omega}$.    
    Now, by the Cauchy-Schwartz inequality
    {\small
    \begin{equation*}
        \displaystyle{\E_B \left [ \| \mat{\Sigma}_2 \mat{H}_{2} \mat{H}_{1}^\dagger \|_F^2 \right ]  \leq \E_B \left [ \| \mat{\Sigma}_2 \mat{H}_{2} \|_F^4 \right ]^{\frac{1}{2}} \E_B \left [ \| \mat{H}_{1}^\dagger \|_F^4 \right ]^{\frac{1}{2}} = \E_B \left [ \| \mat{\Sigma}_2 \mat{V}_2^T \mat{E} \mat{\Omega} \|_F^4 \right ]^{\frac{1}{2}} \E_B \left [ \| \left ( \mat{V}_1^T \mat{E} \mat{\Omega} \right )^\dagger \|_F^4 \right ]^{\frac{1}{2}}}.
    \end{equation*}
    }
    {\color{black}Since $\mat{E}$ and $\mat{\Omega}$ are independent, applying Equation~(A.1d) of Lemma~A.1 in~\cite{Persson2025}, together with the fact that the Frobenius norm dominates the Schatten-4 norm, yields}
    \begin{align*}
        \E_B \left [ \| \mat{\Sigma}_2 \mat{V}_2^T \mat{E} \mat{\Omega} \|_F^4 \right ]^{\frac{1}{2}} &= \E \left [ \E \left [ \| \mat{\Sigma}_2 \mat{V}_2^T \mat{E} \mat{\Omega} \|_F^4 \; \big | \; \mat{E} \right ] \; \big | \; B \right ]^{\frac{1}{2}} \leq \E \left [ 3 \| \mat{\Sigma}_2 \mat{V}_2^T \mat{E} \|_F^4 \| \mat{I}_{r+p} \|_F^4 \; \big | \; B \right ]^{\frac{1}{2}} \\
        &\leq \sqrt{3} (r+p) \| \mat{\Sigma}_2 \|_F^2 \E \left [ \| \mat{V}_2^T \mat{E} \|_2^4 \; \big | \; B \right ]^{\frac{1}{2}} \leq \sqrt{3} (r+p) \frac{(1+\eta) s}{m} \| \mat{\Sigma}_2 \|_F^2,
    \end{align*}
    where we used the definition \eqref{event_B} of the event $B$ in the last inequality. Moreover, {\color{black}since $\E \left [ \| \left ( \mat{V}_1^T \mat{E} \mat{\Omega} \right )^\dagger \|_F^4 \; \big | \; \mat{E} \right ] \leq \frac{3}{p+1} \| (\mat{V}_1^T \mat{E})^{\dagger} \|_F^4$ (see the proof of~\cite[Proposition B.1]{palitta2024} with $p=2$), we have}

    \begin{align*}
        \E_B \left [ \| \left ( \mat{V}_1^T \mat{E} \mat{\Omega} \right )^\dagger \|_F^4 \right ]^{\frac{1}{2}} &= \E \left [ \E \left [ \| \left ( \mat{V}_1^T \mat{E} \mat{\Omega} \right )^\dagger \|_F^4 \; \big | \; \mat{E} \right ] \; \big | \; B \right ]^{\frac{1}{2}} \leq \frac{3}{p+1} \E \left [ \| (\mat{V}_1^T \mat{E})^{\dagger} \|_F^4 \; \big | \; B \right ]^{\frac{1}{2}} \\
        &\leq \frac{3 r \cdot m}{(p+1)(1-\delta) s},
    \end{align*}
    where we used the definition \eqref{event_B} of the event $B$ again in the last inequality. Hence, the thesis.
\end{proof}    

As already mentioned, the results of Theorem~\ref{expected_error_under_B} can be used to estimate the quality of the matrix $\mat{U}_k$ computed at the $k$-th iteration of Algorithm~\ref{alg:Sub-R-HOSVD} in approximating $\text{Range}(\mat{X}^{(k)})$. We now leverage this result to obtain a bound on the error provided by the whole approximation $\ten{\widehat X}=(\mat{U}_1,\ldots,\mat{U}_d)\ten{S}\approx \ten{X}$ computed by Algorithm~\ref{alg:Sub-R-HOSVD}.
To this end, we first define the family of events 
\begin{equation}
B_k = \left \{ \sigma_{r_k}( (\mat{V}_1^{(k)})^T\mat{E}_k) \ge \sqrt{\frac{(1-\delta_k) s_k}{n_{\ne k}}} \qquad \text{and} \qquad \sigma_1( (\mat{V}_2^{(k)})^T\mat{E}_k) \leq \sqrt{\frac{(1+\eta_k) s_k}{n_{\ne k}}} \right \},
\end{equation}
that corresponds to~\eqref{event_B} for the unfolding $\mat{X}^{(k)}$, so that $\mat{V}_1^{(k)}\in\R^{n_{\neq k}\times r_k}$ collects the first $r_k$ right singular vectors of $\mat{X}^{(k)}$ on its columns, $\mat{V}_2^{(k)}\in\R^{n_{\neq k}\times (n_k-r_k)}$ the remaining right singular vectors, and so far for all the other quantities that are now indexed by $k$. In particular, following Lemma~\ref{lemma_fullrank}, $B_k$ holds true with probability $1-r_k \cdot \left ( \frac{\mathtt e^{-\delta_k}}{(1-\delta_k)^{1-\delta_k}} \right )^{\alpha_1^{(k)} \log (r_k)} {\color{black} -} (n_k-r_k) \cdot \left ( \frac{\mathtt e^{\eta_k}}{(1+\eta_k)^{1+\eta_k}} \right )^{\alpha_2^{(k)} \log (n_k-r_k)}$ if $s_k \ge \max \{\alpha_1^{(k)} M_1^{(k)} \log (r_k),\alpha_2^{(k)} M_2^{(k)} \log (n_k-r_k) \}$, with $\alpha_1^{(k)}, \alpha_2^{(k)} \ge 0$ and $M_i^{(k)}=n_{\neq k}\cdot \mu(\mat{V}_i^{(k)})$, $i=1,2$.
{\color{black} We remind the reader once again that for tensors whose matricizations have small coherence we can select the sampling parameter $s_k$ much smaller than $n_{\neq k}$; see, e.g., section~\ref{sec:tucker:exp} where selecting $s_k$ as a tiny fraction of $n_{\neq k}$ still leads to accurate approximations.}
If now we define the event $\mathfrak{B} \vcentcolon = \cap_{k = 1}^d {B_k}$,
a simple union bound shows that $\mathfrak{B}$ holds with probability at least
{\color{black}
\begin{equation}\label{eq:probabilityBinter}
    \exp{\left ( -\sum_{k=1}^d\left( \frac{p_k}{1-p_k}\right) \right )},
\end{equation}
where $p_k \vcentcolon = r_k \cdot \left ( \frac{\mathtt e^{-\delta_k}}{(1-\delta_k)^{1-\delta_k}} \right )^{\alpha_1^{(k)} \log (r_k)} + (n_k-r_k) \cdot \left ( \frac{\mathtt e^{\eta_k}}{(1+\eta_k)^{1+\eta_k}} \right )^{\alpha_2^{(k)} \log (n_k-r_k)}$ and} provided all the entries of the multilinear index $\vec{s}=(s_1,\ldots,s_d)$ are properly chosen.

\begin{theorem}\label{theorem:errorbound_SubRHOSVD}
    If $\widehat{\ten{X}}$ is the approximation of $\ten{X}$ at multilinear rank $\vec{r}$ generated by Algorithm~\ref{alg:Sub-R-HOSVD} with Gaussian matrices $\{\mat{\Omega}_k\}_k$, oversampling parameter $p \ge 4$ such that $r_k + p \le \min\{n_k, \prod_{j \ne k}{n_j}\}$ and multilinear subsampling parameter $\vec{s}=(s_1,\ldots,s_d)^T$ such that $s_k \le n_{\neq k}$ is large enough, then for $\eta_k = \eta_k(s_k) \ge 0$ and $\delta_k = \delta_k(s_k) \in [0,1)$ it holds
    \begin{equation}\label{eq:boundTucker}
	   \E_{\mathfrak{B}} \left [ \| \ten{X} - \widehat{\ten{X}} \|_F  \right ] \le \left ( d + \sum_{k=1}^d 3^{\frac{3}{2}}\frac{r_k \cdot (1+\eta_k)(r_k+p)}{(1-\delta_k)(p+1)} \right )^{\frac{1}{2}} \| \ten{X} - \ten{X}^\star \|_F,
    \end{equation}
    where $\E_{\mathfrak{B}}$ is the conditional expectation under the previously defined event $\mathfrak{B}$ 
    which holds with high probability for properly chosen $\vec{s}$.
    \end{theorem}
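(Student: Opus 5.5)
We follow the standard argument behind the R-HOSVD bound~\eqref{eq:errorbound_RHOSVD} of~\cite{Minster2020RHOSVD}, with Theorem~\ref{expected_error_under_B} supplying the mode-wise estimates.

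\textbf{Step 1: split the error into mode-wise projection errors.} Set $\mat{P}_k=\mat{U}_k\mat{U}_k^T$. Since $\widehat{\ten{X}}=(\mat{P}_1,\ldots,\mat{P}_d)\ten{X}$ and each $\mat{P}_k$ is an orthogonal projector, we telescope
\[
\ten{X}-\widehat{\ten{X}}=\sum_{k=1}^d (\mat{P}_1,\ldots,\mat{P}_{k-1},\mat{I}-\mat{P}_k,\mat{I},\ldots,\mat{I})\ten{X}.
\]
The summands are mutually orthogonal in the Frobenius inner product. Indeed, for $j<k$ the $j$-th summand carries $\mat{I}-\mat{P}_j$ in mode $j$ while the $k$-th carries $\mat{P}_j$ there, and $(\mat{I}-\mat{P}_j)\mat{P}_j=0$. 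Moreover, contractions with projectors do not increase the Frobenius norm. Together these give the deterministic inequality
\[
\|\ten{X}-\widehat{\ten{X}}\|_F^2\le\sum_{k=1}^d\|(\mat{I}-\mat{P}_k)\mat{X}^{(k)}\|_F^2 .
\]
This holds for every realization, hence also under $\mathfrak{B}$.

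\textbf{Step 2: pass to second moments.} By Jensen/H\"older, $\E_{\mathfrak{B}}[\|\ten{X}-\widehat{\ten{X}}\|_F]\le(\sum_k\E_{\mathfrak{B}}[\|\mat{X}^{(k)}-\mat{P}_k\mat{X}^{(k)}\|_F^2])^{1/2}$. Each term should now be bounded by the squared-error form of Theorem~\ref{expected_error_under_B}. That proof in fact establishes
\[
\E_{B}\bigl[\|\mat{X}-\mat{UU}^T\mat{X}\|_F^2\bigr]\le\Bigl(1+3^{\frac32}\tfrac{r(1+\eta)(r+p)}{(1-\delta)(p+1)}\Bigr)\|\mat{\Sigma}_2\|_F^2,
\]
before the final square root is taken. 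Applied to $\mat{X}^{(k)}$ with the event $B_k$, the right-hand side contains $\|\mat{\Sigma}_2^{(k)}\|_F^2$, the squared tail of the singular values of $\mat{X}^{(k)}$ beyond $r_k$.

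\textbf{Step 3: conditioning.} This is the main technical point: we need $\E_{\mathfrak{B}}$, not $\E_{B_k}$, for the $k$-th term. The sampling matrices $\mat{E}_j$ and Gaussians $\mat{\Omega}_j$ are drawn independently across modes, and $\mat{P}_k$ depends only on $(\mat{E}_k,\mat{\Omega}_k)$. The events $B_j$ for $j\ne k$ depend only on $\mat{E}_j$. Hence conditioning on $\mathfrak{B}=\cap_j B_j$ reduces to conditioning on $B_k$ for the $k$-th term, so $\E_{\mathfrak{B}}[\|(\mat{I}-\mat{P}_k)\mat{X}^{(k)}\|_F^2]=\E_{B_k}[\cdot]$. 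In the proof I would write this independence step out explicitly, since in Step 1 the projectors of other modes were already removed deterministically.

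\textbf{Step 4: compare with the best approximation.} Finally, use $\|\mat{\Sigma}_2^{(k)}\|_F\le\|\ten{X}-\ten{X}^\star\|_F$ for each $k$. The matricization $(\ten{X}^\star)^{(k)}$ has rank at most $r_k$, so by Eckart--Young $\|\ten{X}-\ten{X}^\star\|_F=\|\mat{X}^{(k)}-(\ten{X}^\star)^{(k)}\|_F\ge\|\mat{\Sigma}_2^{(k)}\|_F$. Summing the $d$ bounds gives the factor $d+\sum_k 3^{3/2}\frac{r_k(1+\eta_k)(r_k+p)}{(1-\delta_k)(p+1)}$ multiplying $\|\ten{X}-\ten{X}^\star\|_F^2$. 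Taking the square root yields~\eqref{eq:boundTucker}.
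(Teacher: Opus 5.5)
Your proposal is correct and follows essentially the same route as the paper. The shared steps are the deterministic mode-wise split (the paper imports it as Lemma 2.1 of~\cite{Minster2020RHOSVD}; you prove it via the orthogonal telescoping sum), the squared-error form of Theorem~\ref{expected_error_under_B} for each $\mat{X}^{(k)}$, H\"older/Jensen, and $\|\mat{\Sigma}_2^{(k)}\|_F\le\|\ten{X}-\ten{X}^\star\|_F$. Your independence argument replacing $\E_{\mathfrak{B}}$ by $\E_{B_k}$ in the $k$-th term, and your remark that the squared bound is what is really used, make explicit two steps the paper leaves implicit.
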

\begin{proof}
    By Lemma 2.1 in~\cite{Minster2020RHOSVD},  we can write
    \begin{equation*}
        \E_{\mathfrak{B}} \left [ \| \ten{X} - \widehat{\ten{X}} \|_F^2  \right ] \le \E_{\mathfrak{B}} \left [ \sum_{k=1}^d \| \ten{X} \times_k \big ( \mat{I} - {\color{black}\mat{U}_k\mat{U}_k^T} \big ) \|_F^2  \right ] = \sum_{k=1}^d \E_{\mathfrak{B}} \left [ \| \ten{X} \times_k \big ( \mat{I} -  {\color{black}\mat{U}_k\mat{U}_k^T} \big ) \|_F^2  \right ].
    \end{equation*}
    Noticing that $ \| \ten{X} \times_k \big ( \mat{I} - {\color{black}\mat{U}_k\mat{U}_k^T} \big ) \|_F^2 =  \| \big ( \mat{I} - {\color{black}\mat{U}_k\mat{U}_k^T} \big ) \mat{X}^{(k)} \|_F^2$ and by applying Lemma~\ref{expected_error_under_B}, we have
    \begin{equation*}
        \E_{\mathfrak{B}} \left [ \| \ten{X} - \widehat{\ten{X}} \|_F^2  \right ] \le \sum_{k=1}^d \left ( 1 + 3^{\frac{3}{2}}\frac{r_k \cdot (1+\eta_k)(r_k+p)}{(1-\delta_k)(p+1)} \right )  \|\mat{\Sigma}_2^{(k)}\|_F^2.
    \end{equation*}
    Thus, by H\"older inequality
    \begin{align*}
        \E_{\mathfrak{B}} \left [ \| \ten{X} - \widehat{\ten{X}} \|_F  \right ] &\le \left ( \E_{\mathfrak{B}} \left [ \| \ten{X} - \widehat{\ten{X}} \|_F^2  \right ] \right )^\frac{1}{2} \\
        &\le \left ( \sum_{k=1}^d \left ( 1 + 3^{\frac{3}{2}}\frac{r_k \cdot (1+\eta_k)(r_k+p)}{(1-\delta_k)(p+1)} \right )  \|\mat{\Sigma}_2^{(k)}\|_F^2 \right )^\frac{1}{2} \\
        &\le \left ( d + \sum_{k=1}^d 3^{\frac{3}{2}}\frac{r_k \cdot (1+\eta_k)(r_k+p)}{(1-\delta_k)(p+1)} \right )^{\frac{1}{2}} \| \ten{X} - \ten{X}^\star \|,
    \end{align*}
    since $\|\mat{\Sigma}_2^{(k)}\|_F^2 \le \| \ten{X} - \ten{X}^\star \|^2$.
\end{proof}

{\color{black} We believe it is important to stress the role of the scalars $\delta_k$ and $\eta_k$ in~\eqref{eq:boundTucker} and their relation to the sampling parameter $s_k$. By keeping an eye on Lemma~\ref{lemma_fullrank} for the matrix case, we can state that having a sufficiently large $s_k$ allows us to select very modest values of $\delta_k$ and $\eta_k$ without compromising the likelihood of the event $\mathfrak{B}$\footnote{{\color{black}A large $s_k$ allows for large values of $\alpha_1^{(k)}$ and $\alpha_2^{(k)}$ in~\eqref{eq:probabilityBinter}.}}. If we consider the extreme case of having $\delta_k=\eta_k=0$ for all $k$, then the bound in~\eqref{eq:boundTucker} turns out to be of the same quality of the one given by R-HOSVD, namely~\eqref{eq:errorbound_RHOSVD}. Once again, the magnitude of $s_k$ must be related to the coherence of the corresponding unfolding $\mat{X}^{(k)}$. For low-coherence tensors, namely tensors whose unfoldings have all low coherence, moderate values of $s_k$ are sufficient to achieve very successful compressions. In particular, while $s_k$ must be greater than $n_k$\footnote{{\color{black}As a rule of thumb, we believe selecting $s_k\geq 10\cdot n_k$ is a sensible option.}}, the former can be selected orders of magnitude smaller than $n_{\neq k}$ as shown in the next sections.
}
\subsection{Numerical results}\label{sec:tucker:exp}
In this section we report the results of our numerical testing. We span from artificially constructed problems (section~\ref{sec:tucker:exp:synthetic}--\ref{sec:experiments:parallel}) to real-world ones (section~\ref{sec:exp:realworld}), from a sequential setting (section~\ref{sec:tucker:exp:synthetic}--\ref{sec:exp:realworld}) to a parallel computing environment (section~\ref{sec:experiments:parallel}). Further numerical experiments can be found in the supplementary material. We compare the performance of Algorithm~\ref{alg:Sub-R-HOSVD} against that achieved by several state-of-the-art techniques. In particular,
\begin{description}
    \item[High Order Singular Value Decomposition (HOSVD)] is a deterministic\textcolor{black}{,} widely used algorithm to compute the Tucker decomposition, briefly described in Section~\ref{sec:prelim:tucker}. We implement it in \texttt{Python} using the \texttt{NumPy} library for the classical matrix operations and the \texttt{TensorLy} library to perform the tensor matricization and the TTM product.
    \item[Sequentially Truncated-HOSVD (ST-HOSVD)] is a deterministic\textcolor{black}{,} widely used algorithm to compute the Tucker decomposition, with lower computational costs than the HOSVD, as described in Section~\ref{sec:prelim:tucker}. We implement it in \texttt{Python} using the \texttt{NumPy} library for the classical matrix operations and the \texttt{TensorLy} library to perform the tensor matricization and the TTM product.
    \item[Randomized-HOSVD (R-HOSVD){\rm,}] originally proposed in~\cite{Halko2011rHOSVD}, is a randomized algorithm with the same structure \textcolor{black}{as} the HOSVD. The key difference is that the deterministic SVD of the tensor unfoldings is replaced by the randomized SVD for each mode. {\color{black}In our experiments, we employ Gaussian random sketching matrices and set the oversampling parameter to $p=2$.} As for the HOSVD, we implement it in \texttt{Python} using the \texttt{NumPy} library for the classical matrix operations and the \texttt{TensorLy} library to perform the tensor matricization and the TTM product.
    \item [Randomized HOSVD with Kronecker product Reusing factor (rHKron)]~\cite{Minster2024ReKron}, that is a randomized algorithm consisting in two main steps. Firstly, for each mode $k$, the input tensor is sketched along all modes except the $k$-th one, and an orthogonal basis approximating  
    the range of its matricization along mode $k$ is computed by QR. Once all the $d$ Q-factors are computed, they are used to reduce the input tensor via a TTM along all modes. The resulting reduced tensor is decomposed by a\textcolor{black}{n} ST-HOSVD. The ST-HOSVD core is returned as core tensor of this decomposition, while the ST-HOSVD factor matrices are multiplied by the Q-factors to form the Tucker factors. We implement this algorithm in \texttt{Python} using the \texttt{NumPy} library for the classical matrix operations and the \texttt{TensorLy} library to perform the tensor matricization and the TTM product.
\end{description}
All these methods are compared in terms of approximation accuracy and computational efficiency. Results for the randomized algorithms are reported through boxplots and logarithmic-scale plots of the approximation error and computational time, computed over $25$ independent runs, in order to assess both typical performance and variability.
{\color{black} When we report the results of Sub-R-HOSVD, we also include a scaling factor that determines the number of selected fibers along each mode. In particular, we use labels of the form ``$\alpha$ Sub-R-HOSVD''. This means that, for mode $k$, the algorithm selects a number of fibers proportional to the mode size $n_k$, with the reported value $\alpha$ acting as the proportionality coefficient, namely $s_k=\alpha n_k$. This choice ensures a consistent sampling rate across modes while adapting to the dimensionality of the data, without explicitly forming tensor unfoldings.}

All experiments, both serial and parallel, were performed on a single node of the Booster partition of Leonardo~\cite{leonardo}, an HPC system hosted at Cineca\footnote{More details can be found at \url{https://www.hpc.cineca.it/systems/hardware/leonardo/}}. More in detail, we used an Atos BullSequana X2135, Da Vinci single-node GPU, equipped with one Intel Ice Lake Xeon Platinum 8358 (32-core) CPU processor and 512\,GiB DDR4 3200\,MHz RAM. The node features four NVIDIA Ampere A100 GPUs (custom), 64\,GiB HBM2e with NVLink 3.0 (200\,GB/s) accelerators. 
%
%

\subsubsection{Synthetic tests}\label{sec:tucker:exp:synthetic}
In the synthetic experiments, we consider low-rank tensors of mode dimensions and rank components all equal to $15$ and $5$, respectively, and varying order, from $d=4$ to $d=8$. {The core tensor entries come from the evaluation of the function in \cite[Equation $(37)$]{Ahmadi2021ReviewRandTucker} on a multidimensional grid, while the Tucker factors come from the orthogonalization of randomly generated matrices with i.i.d. entries from the normal distribution. After generating the core and the factors, dense tensors are reconstructed and then decomposed by the considered algorithms, to test the ability of the latter ones in computing accurate low-rank decompositions. 

For each tensor order, boxplots summarize the distribution of errors and computational times, while logarithmic-scale plots show the median performance as a function of $d$, highlighting the good scaling behavior of the methods. As mentioned above, for these problems we run Algorithm~\ref{alg:Sub-R-HOSVD} in a sequential manner, namely the loops are performed one after the other. Indeed, one of the goals of this section is to show how our computational improvements lead to dramatic speed\textcolor{black}{-}ups of the tensor compression even when not working with proper parallel computing architectures.

The results are consistent across all the different tests and can be summarized as follows. In terms of running time, our Sub-R-HOSVD is always very competitive to the point that it becomes one order of magnitude faster than most of the other routines for a large number of modes $d\geq 5$. Only when compared to ST-HOSVD, this performance gap becomes narrower. Indeed, the sequential reductions of the tensor performed by ST-HOSVD allow for the computation of the SVDs of objects of decreasing size, with a consequent cost reduction. Nevertheless, Sub-R-HOSVD is still faster than ST-HOSVD, in addition to \textcolor{black}{being} extremely suitable for parallelization as we will show in section~\ref{sec:experiments:parallel}.

The good performance of Sub-R-HOSVD is mainly due to our subsampling step that avoids handling any large matricization. Moreover, it is interesting to see how an extremely small number of fibers is indeed enough to get very accurate results for these examples, where the factorized tensors are low-rank in the Tucker sense by construction. In particular, when $d=8$ and the subsampling parameter is set to 75 {\color{black}(cf. 5 Sub-R-HOSVD)}, it means that, for each mode, we are employing only 75 columns of the current matricization instead of using all its $n^7=170\,859\,375$ columns. \textcolor{black}{Sub-R-HOSVD} thus works with 0.00005\% of the data without \textcolor{black}{incurring} 
any loss of accuracy. We also highlight that it looks quite the opposite. In particular, the error achieved by \textcolor{black}{Sub-R-HOSVD} remains \textcolor{black}{approximately} constant when increasing the number of modes $d$. This does not happen for HOSVD, ST-HOSVD, \textcolor{black}{and} R-HOSVD,
whose error \textcolor{black}{poorly} scales with $d$. We conjecture that this different trend is due to handling much smaller objects in our setting. This feature, \textcolor{black}{which} is \textcolor{black}{also shared} by the rHKron scheme, may alleviate the accumulation of numerical error in finite precision arithmetic. We believe this interesting phenomenon deserves further study that we plan to carry out in the near future.

\begin{figure}[t!]
\centering
\subfloat[Median relative error \textcolor{black}{on the y-axis}.]{\label{fig:Tuck:37:medianerr}
    \includegraphics[width=0.4\textwidth]
    {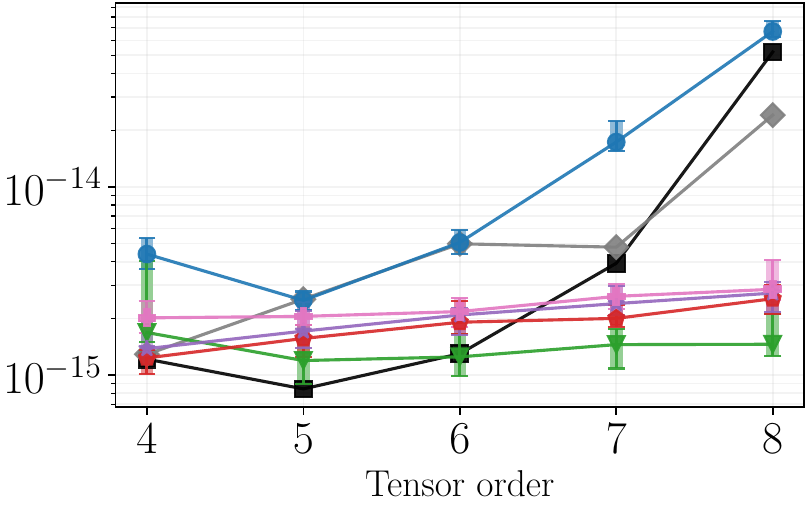}}
\hspace{15pt}
\subfloat[Median computational time \textcolor{black}{on the y-axis}.]{\label{fig:Tuck:37:mediantime}
    \includegraphics[width=0.4\textwidth]
    {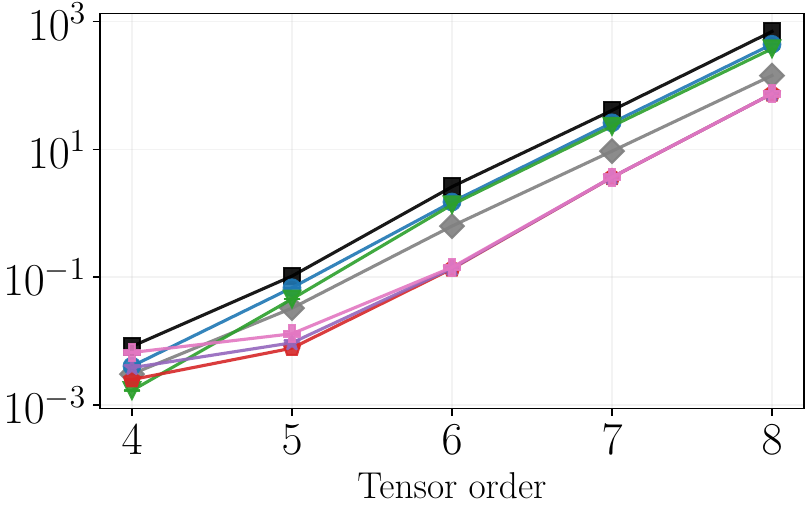}}
\vspace{5pt}
\subfloat[Boxplot of the relative errors for each order.]{\label{fig:Tuck:37:medianerr:BOX}
    \includegraphics[width=0.99\linewidth]{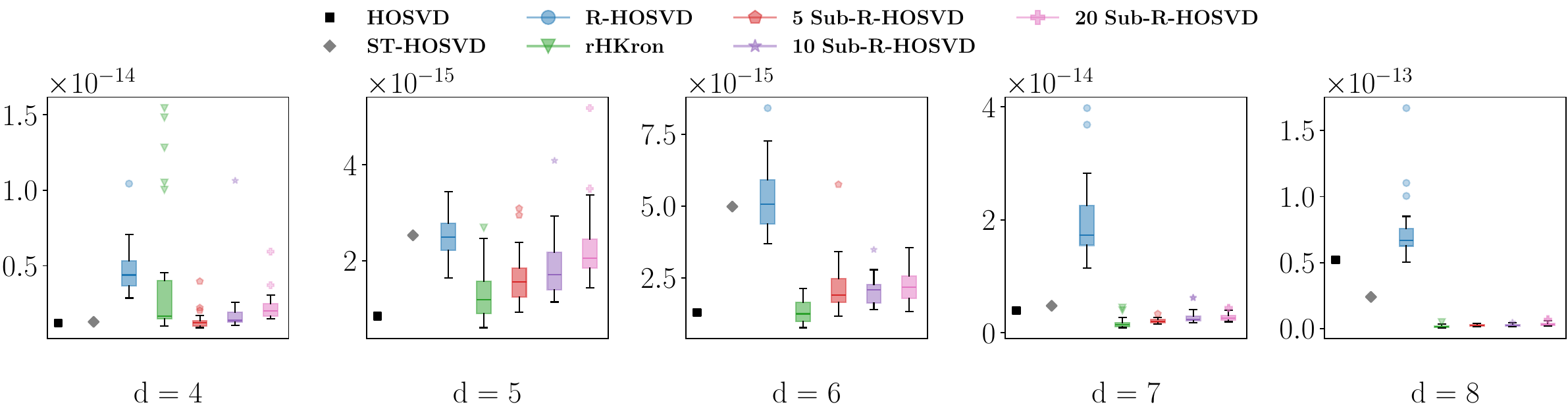}}
\caption{Comparison of the considered methods on synthetic tensors with core entries generated as in~\cite[Equation $(37)$]{Ahmadi2021ReviewRandTucker}. All the randomized algorithms are tested over $25$ independent runs.}
\label{fig:Tuck:37}
\end{figure}

{\color{black}
We conclude this example by documenting the memory consumption of our Sub-R-HOSVD and comparing it to that of the deterministic routines HOSVD and ST-HOSVD for $d\geq6$; for smaller $d$ all these methods employ similar storage allocation. The results are reported in Table~\ref{table:memory}. For a given method, the column \emph{Baseline} collects the overall storage 
measured prior the call to that routine, \emph{Peak} represents the peak in the memory requirements needed by the corresponding computational strategy, and \emph{Extra} is just the difference between \emph{Peak} and 
\emph{Baseline}, namely the actual memory a given routine needs to complete its task. All the reported storage demands are in Gigabytes.
These results show how Sub-R-HOSVD requires only a fraction of the memory needed by the deterministic methods HOSVD and ST-HOSVD. Indeed, our randomization-based scheme asks for about 66\% less memory than HOSVD and ST-HOSVD. 

\begin{table}[t!]
\centering
{\color{black}
{
\begin{tabular}{r| rrr|r rr| rrr}
    &  \multicolumn{3}{c|}{HOSVD}& \multicolumn{3}{c|}{ST-HOSVD} & \multicolumn{3}{c}{Sub-R-HOSVD} \\
$d$ & Baseline & Peak & Extra & Baseline & Peak & Extra & Baseline & Peak & Extra \\
6 & 0.31 & 0.68& 0.36 & 0.33& 0.65 & 0.32& 0.35& 0.45 & 0.10\\
7 & 1.61 & 6.73 & 5.12 & 1.61&  6.82 & 5.21 &1.65& 3.37 & 1.72\\
8 & 19.86 & 98.07 & 78.21 & 19.87& 97.93 &78.05 &19.91&  45.99 & 26.07\\
\end{tabular}
\caption{{\color{black}Memory demand required by HOSVD, ST-HOSVD, and Sub-R-HOSVD with a fixed subsampling parameter $s=20\cdot n$. Baseline: storage prior the call of the corresponding routine. Peak: maximum memory usage. Extra: Peak minus Baseline. All the reported values are in Gigabytes (GB).}}\label{table:memory}
}}
\end{table}

}
\subsubsection{Real-world examples}\label{sec:exp:realworld}
{\color{black}We now repeat similar experiments as above but on a tensor storing real-world data. The main point of this test is to show that the sampling step does not lead to any serious loss in the accuracy of the computed approximation, also when dealing with application-oriented datasets. This suggests that our Sub-R-HOSVD can be a strong ally to reduce the computational cost and memory requirements of tensor compression in real-world applications where the dimensionality of the datasets at hand may make any operation involving \textcolor{black}{their} full representation extremely costly in practice.

We use a meteorological dataset of Northern Italy obtained from the Copernicus Emergency Management Service (CEMS) Early Warning Data Store\footnote{\url{https://ewds.climate.copernicus.eu/}}, and arrange the data into a tensor of shape $(5,10,12,112,80,270)$. {\color{black}These six dimensions correspond to five environmental variables (River discharge in the last 6 hours, Runoff water equivalent (surface plus subsurface), Snow depth water equivalent, Volumetric soil moisture, and Soil wetness index (root zone)), ten years of observations (2014--2023), twelve months, 112 six-hourly observations per month (obtained by truncating each month to 28 days, yielding $28\times4=112$ observations), and an $80\times270$ spatial grid covering the physical domain. In this case, we truncate the input tensor to multilinear rank $(5,8,10,28,50,150)$, corresponding to approximately $5.8\%$ of the size of the original tensor}, using the T-HOSVD rather than the HOSVD. For the randomized algorithms, results are reported in Figure~\ref{fig:Tuck:meteo} using boxplots of error and computational time over 25 independent runs.

As for the results in section~\ref{sec:tucker:exp:synthetic}, also in this case, Sub-R-HOSVD is one order of magnitude faster than most of the other methods while achieving very similar accuracy records. As before, the performance of ST-HOSVD turns out to be closer to that of Sub-R-HOSVD with the latter algorithm being consistently faster than the former. }

\begin{figure}[t!]
\centering
\subfloat[Boxplot of the relative errors.]{\label{fig:Tuck:meteo:BOXerr}
\includegraphics[height=0.199\linewidth]{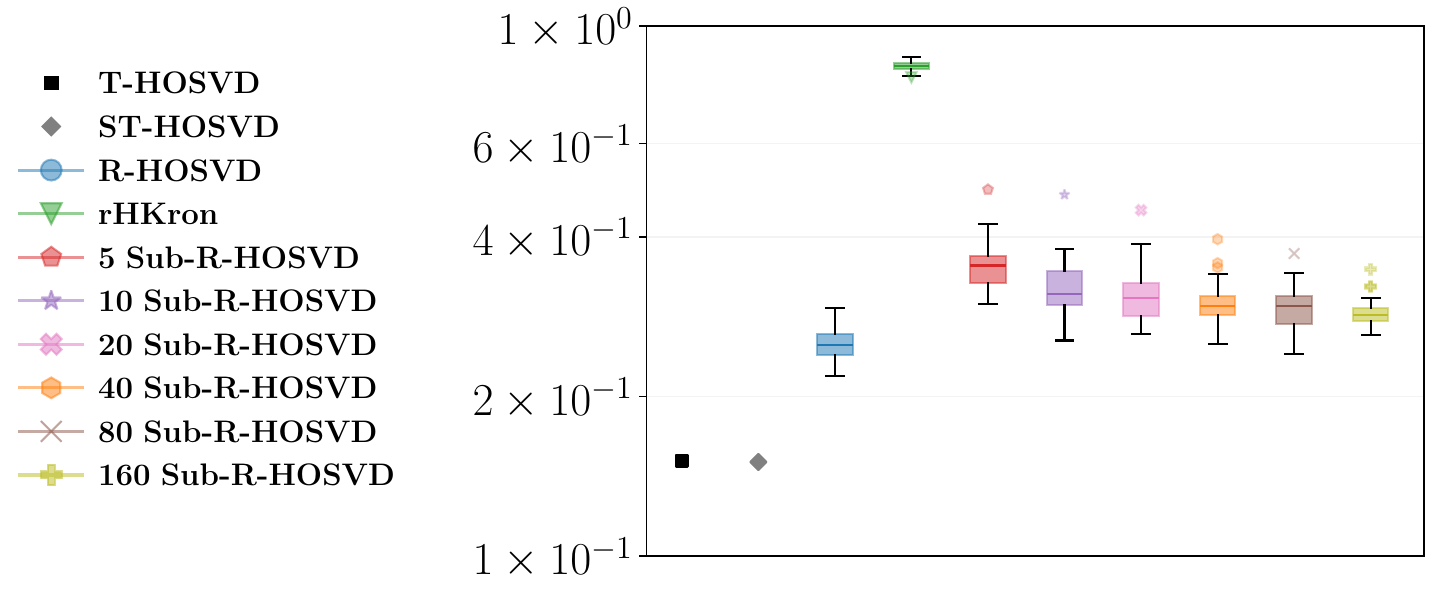}}
\subfloat[Boxplot of the computational times.]{\label{fig:Tuck:meteo:BOXtime}
\hspace{40pt}
\includegraphics[height=0.192\linewidth]{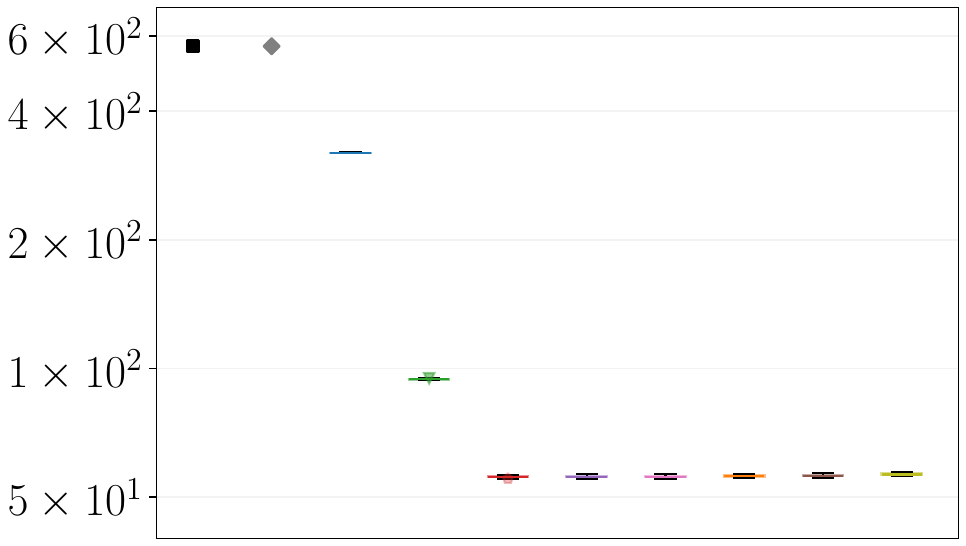}}
\caption{Comparison of the considered methods on the tensor constructed constructed from the meteorological data over Northern Italy obtained from the CEMS Early Warning Data Store. All the randomized algorithms are tested over $25$ independent runs.}
\label{fig:Tuck:meteo}
\end{figure}

\subsubsection{Parallel experiment}\label{sec:experiments:parallel}

The goal of this section is to assess the parallel scalability of Sub-R-HOSVD. 
The parallel implementation of our scheme consists of two main stages. 
The first stage computes the orthogonal factor matrices {\color{black}$\mat{U}_k$}, $k=1,\dots,d$, in Algorithm~\ref{alg:Sub-R-HOSVD} in parallel.
The second stage performs the successive TTM operations 
(line~\ref{alg:subRHOSVD_computecore} in Algorithm~\ref{alg:Sub-R-HOSVD}) to 
progressively reduce the tensor and compute the core tensor.
To this end, the input tensor is first partitioned into a number of slices equal to the number 
of available processes along a suitable mode $\bar{k}$, chosen to ensure a balanced 
workload. Each slice is assigned to a different process. 
Each process then performs, in serial, all TTM operations along the modes except 
for mode $\bar{k}$, producing a locally reduced tensor $\bar{\ten{S}}$. The master 
process subsequently gathers all reduced tensors and performs the final TTM along 
mode $\bar{k}$ to obtain the core tensor $\ten{S}$. Consequently, the only serial 
component of this stage is the last TTM executed after the collection of all $\bar{\ten{S}}$. 

We test the parallel implementation of Sub-R-HOSVD on a synthetic low-rank tensor, constructed as the one of section~\ref{sec:tucker:exp:synthetic}, with order $d=8$ and rank $5$ along each mode, and mode dimension varying from $n=16$ to $n=20$. 
For each mode $k=1,\ldots,d$, the subsampling parameter is set to $s\equiv s_k=10^4$. This means that the percentage of columns of the matricization $\mat{X}^{(k)}$ we employ varies from $0.004\%$ (for $n=16$) to $0.0008\%$ (for $n=20$) for each $k=1,\ldots,d$. 

In Figure~\ref{fig:Tuck:PAR:time} and Figure~\ref{fig:Tuck:PAR:speedup} we report strong scaling and speed-up results, respectively, as the number of processes doubles, from $1$ up to $8$ while the problem size $n$ is fixed. 
Notice the logarithmic scale in~Figure~\ref{fig:Tuck:PAR:time}. In particular, the time devoted to the serial component of the code, i.e., the last TTM mentioned above, is at least one order of magnitude smaller than the total execution time, making the former negligible in the overall parallel performance of the method. In Figure~\ref{fig:Tuck:PAR:time}, for each $n$ and number of adopted processes we report the total execution time in seconds (rounded to the nearest integer) on top of the related {\color{black}bars along with the running time of each main component inside the corresponding bar}. This helps us \textcolor{black}{appreciate} the very good strong scaling of our numerical scheme. On the other hand, by looking at Figure~\ref{fig:Tuck:PAR:speedup}, we can notice that the overall speed\textcolor{black}{-}up worsens as $n$ increases. A thorough inspection of the performance of our code shows that this is due to the construction of the \textcolor{black}{indices} for the fiber sampling. Indeed, this operation does not scale linearly with $n$. To see this, let \textcolor{black}{us} consider $n=19$. By recalling that $d=8$ (and fixed), we need to draw $s$ integers in $[1,n^7]$ with $n^7=19^7\approx8.93\cdot10^8$. If we increase $n$ to $n=20$, we still pick $s$ integers but
now in a much wider interval as $20^7\approx1.28\cdot10^9$ and this operation is more expensive than before, when $n$ was $19$. 

To fix this issue, we \textcolor{black}{also parallelize} the generation of the integers for the fiber sampling, as follows. For a given $n$, we divide the interval $[1,n^{d-1}]$ into four subintervals of similar length: $[1,\lfloor n^{d-1}/4\rfloor]$, $[\lfloor n^{d-1}/4\rfloor]+1,2\lfloor n^{d-1}/4\rfloor]$, $2[\lfloor n^{d-1}/4\rfloor]+1,3\lfloor n^{d-1}/4\rfloor]$, and $3[\lfloor n^{d-1}/4\rfloor]+1,4\lfloor n^{d-1}/4\rfloor+\text{mod}(n^{d-1},4)]$. From the first three intervals we randomly pick $\lfloor s/4\rfloor$ integers whereas
we pick $\lfloor s/4\rfloor+\text{mod}(s,4)$ of them from the last interval. We run the generation of these \textcolor{black}{four} sets of integers, and the extraction of the related fibers, in parallel. We report the results obtained by adopting this strategy in Figure~\ref{fig:Tuck:PAR_parextraction}. In particular, looking at Figure~\ref{fig:Tuck:PAR:time_bis} we can see that the overall running times are lower than the ones reported in Figure~\ref{fig:Tuck:PAR:time} for every $n$ and number of employed processes. More remarkably, Figure~\ref{fig:Tuck:PAR:time_bis} showcases the much better speed\textcolor{black}{-}up we obtain by running also the \textcolor{black}{indices} generation in parallel. In particular, the speed up related to the computation of the factors {\color{black}$\mat{U}_k$} (light blue line with circles) gets very close to the optimal speed up (dashed black line), also for $n=20$. This is clearly beneficial also in terms of the overall speed up (yellow line with triangles).

We conclude this section with a couple of comments. First, the reasoning behind splitting $[1,n^{d-1}]$ into four parts is due to the computational resources we have available. Indeed, if we split this interval into $\ell$ subintervals, and $p$ denotes the number of processes we have available for computing the {\color{black}$\mat{U}_k$} factors, then the overall number of processes we actually need for the parallel generation of the indices is $\ell p$. We have a maximum of 32 processes at our disposal which means we can divide $[1,n^{d-1}]$ \textcolor{black}{into} $\lfloor 32/p\rfloor$ subintervals, at most. For the maximum number of processes we employ for the computation of the {\color{black}$\mat{U}_k$} factors, i.e., $p=8$\footnote{We recall that employing a number of processes larger than $d$ would not make much sense for the computation of the {\color{black}$\mat{U}_k$} factors.}, this means that we can afford to split $[1,n^{d-1}]$ \textcolor{black}{into} at most $\ell=32/8=4$ subintervals.

The second comment is about the selection of the integers for the fiber sampling. It is true that randomly picking $s$ integers in $[1,n^{d-1}]$ is not equivalent to what we do with our parallel computation. This means that the assumptions we had in section~\ref{sec:sr-HOSVD} for deriving our error bounds are not exactly satisfied and some technical adjustments are probably needed. However, since in section~\ref{sec:sr-HOSVD} we assumed a uniform \textcolor{black}{draw} from $[1,n^{d-1}]$, the strategy of dividing the latter interval into subintervals and then uniformly picking integers from those may be seen as a good approximation of the original integer generation and no significant difference has been observed in practice.
{\color{black} We would like also to mention that different strategies for achieving an efficient fiber index generation may be adopted as well; see, e.g., the supplementary material for further results coming by employing one of those.}

\ifbool{showfigures}{
\begin{figure}[t!]
\centering
\subfloat[Strong scaling as a function of the number of processes.]{\label{fig:Tuck:PAR:time}\hspace{-10pt}
\includegraphics[width=1.02\linewidth]{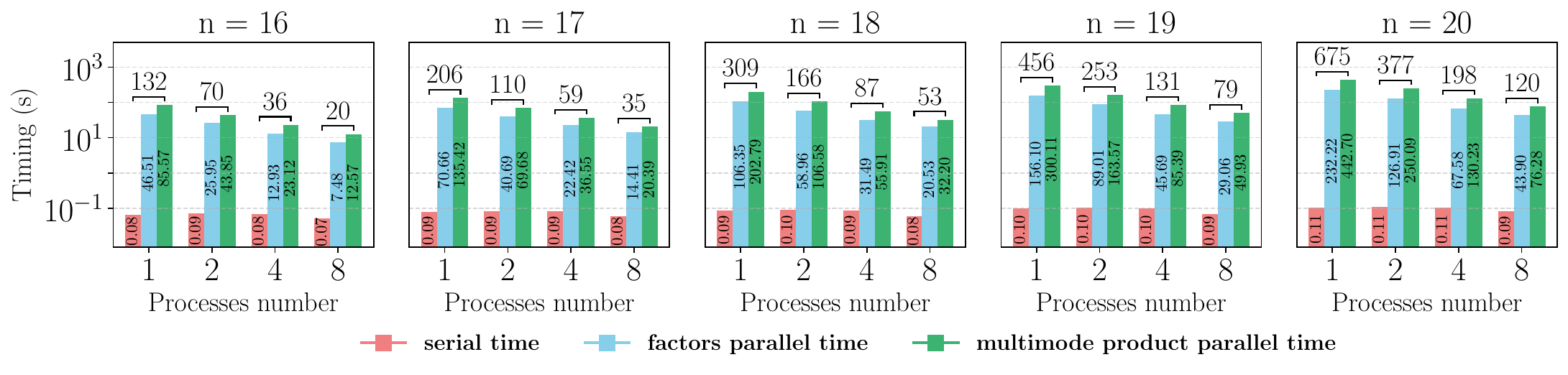}} \\
\subfloat[Speed-up  as a function of the number of processes.]{\label{fig:Tuck:PAR:speedup}
\includegraphics[width=0.99\linewidth]{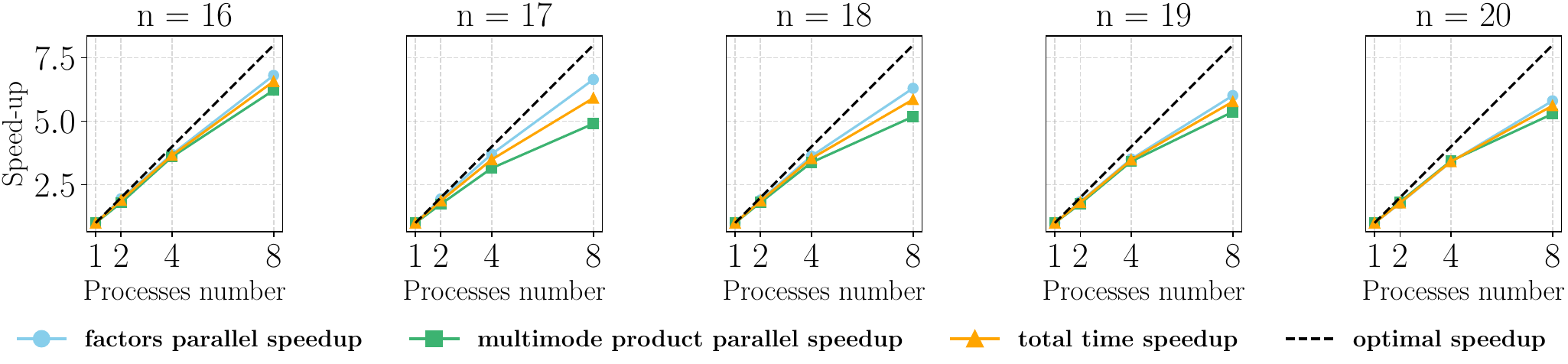}}
\caption{{Parallel performance of Sub-R-HOSVD. The dashed line with slope~1 denotes the ideal linear speed-up.}}
\label{fig:Tuck:PAR}
\end{figure}}{
\centering{\color{black} figures off}
}

\ifbool{showfigures}{
\begin{figure}[t!]
\centering
\subfloat[Strong scaling as a function of the number of processes.]{\label{fig:Tuck:PAR:time_bis}\hspace{-10pt}
\includegraphics[width=1.02\linewidth]{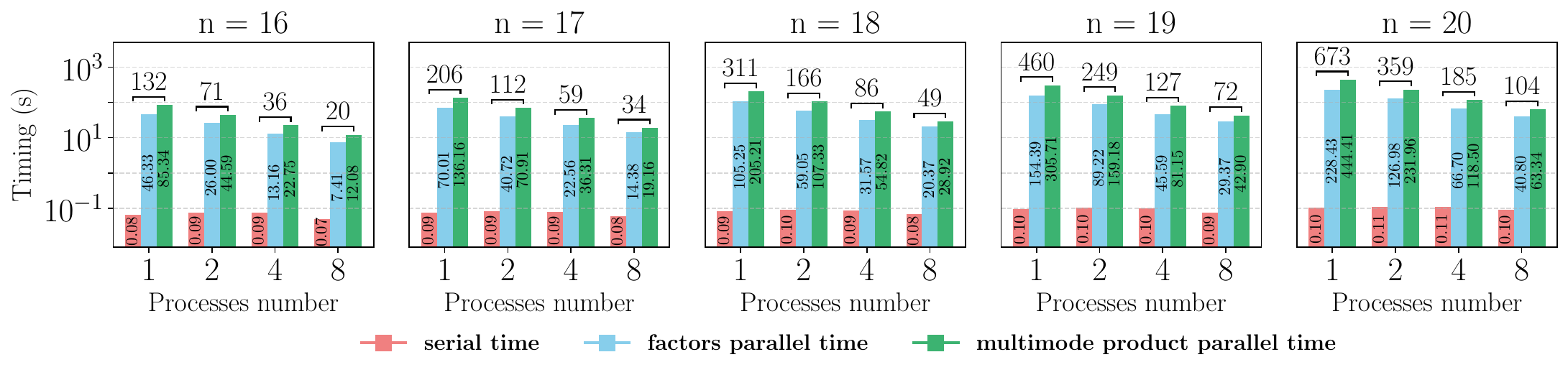}} \\
\subfloat[Speed-up  as a function of the number of processes.]{\label{fig:Tuck:PAR:speedup_bis}
\includegraphics[width=0.99\linewidth]{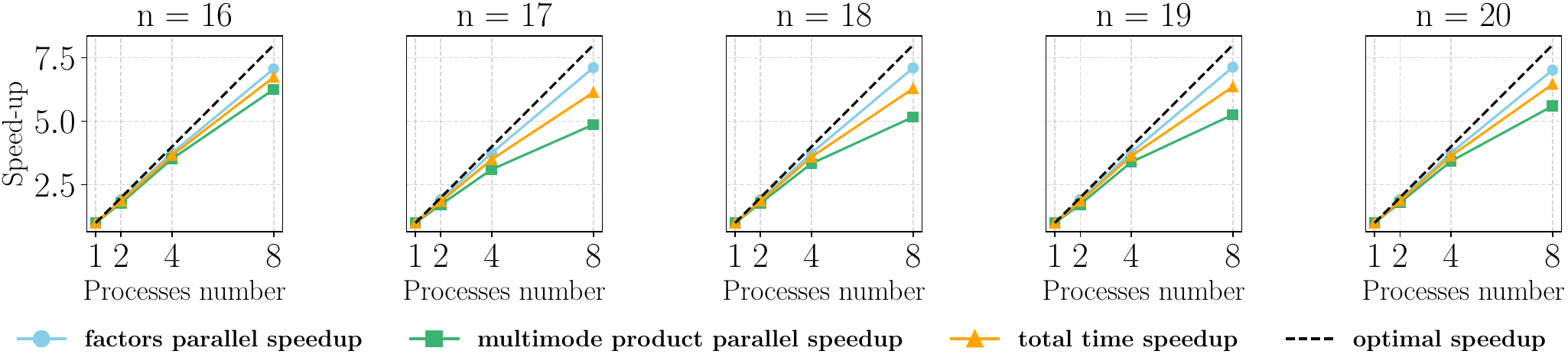}}
\caption{{Parallel performance of Sub-R-HOSVD with a parallel computation of the indeces for the fiber sampling. The dashed line with slope~1 denotes the ideal linear speed-up.}}
\label{fig:Tuck:PAR_parextraction}
\end{figure}}{
\centering{\color{black} figures off}
}



\section{The H-Tucker decomposition}\label{H-Tucker}

\begin{figure}
\centering
\scalebox{0.8}{
\begin{tikzpicture}[
    scale = 0.8,
    level distance=2cm,
    level 1/.style={sibling distance=10cm},
    level 2/.style={sibling distance=5cm},
    level 3/.style={sibling distance=2.5cm},
    set node/.style={ellipse, draw, thick, minimum height=0.8cm, inner sep=3pt},
    leaf node/.style={circle, draw, thick, minimum size=0.8cm, inner sep=0pt},
    tag node/.style={draw=none, rectangle, font=\large}
]

\node[set node] (root) {$\large 1,2,3,4,5,6,7,8$}
    child {
        node[set node] (n1) {$1,2, 3,4$}
        child {
            node[set node] (n11) {$1,2$}
            child { node[leaf node] (n111) {$1$} }
            child { node[leaf node] (n112) {$2$} }
        }
        child {
            node[set node] (n12) {$3,4$}
            child { node[leaf node] (n121) {$3$} }
            child { node[leaf node] (n122) {$4$} }
        }
    }
    child {
        node[set node] (n2) {$5,6, 7,8$}
        child {
            node[set node] (n21) {$5,6$}
            child { node[leaf node] (n211) {$5$} }
            child { node[leaf node] (n212) {$6$} }
        }
        child {
            node[set node] (n22) {$7,8$}
            child { node[leaf node] (n221) {$7$} }
            child { node[leaf node] (n222) {$8$} }
        }
    };

\node[tag node] at ([yshift=-1cm]root.center) {$\ten{B}_0$};
\node[tag node] at ([yshift=-1cm]n1.center) {$\ten{B}_{1}$};
\node[tag node] at ([yshift=-1cm]n2.center) {$\ten{B}_2$};
\node[tag node] at ([yshift=-1cm]n11.center) {$\ten{B}_3$};
\node[tag node] at ([yshift=-1cm]n12.center) {$\ten{B}_4$};
\node[tag node] at ([yshift=-1cm]n21.center) {$\ten{B}_5$};
\node[tag node] at ([yshift=-1cm]n22.center) {$\ten{B}_6$};
\end{tikzpicture}
}
\caption{Dimension tree for an order-$8$ tensor with transfer tensors enumerated by heap indexing.}
\label{fig:HT:dimtree}
\end{figure}
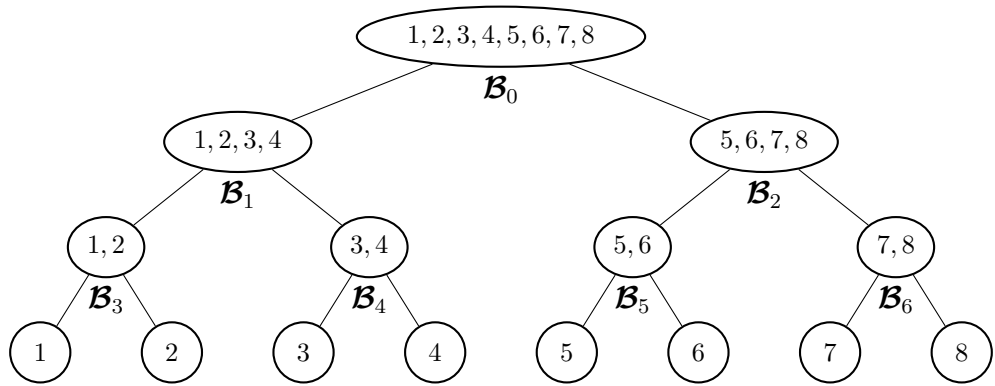

The Hierarchical Tucker (H-Tucker) decomposition relies on a \emph{dimension tree} to decompose order-$d$ tensors.
\begin{definition}
    A dimension tree $\set{T}$, associated with an order-$d$ tensor $\ten{X}\in\R^{\vec{n}}$, is a binary tree of depth $z = \lceil{\log_2 d}\rceil$ such that: 
    \begin{enumerate}
        \item the root is a set including all the tensor mode indices, $\set{R} = \{1,\dots, d\}$; 
        \item all leaves are singleton\textcolor{black}{s}, $\set{L}_k = \{k\}$ for $k=1,\dots, d$;
        \item each internal node $\set{S}$ has two disjoint children, that is \textcolor{black}{$\set{S} = \set{S}_{\ell}\, {\cup}\,\set{S}_{r}$ and\\ $\set{S}_{\ell}\cap  \set{S}_{r} = \emptyset$}.
    \end{enumerate}
    Let \textcolor{black}{us} assume the nodes of $\set{T}$ to be enumerated. Then, the \emph{hierarchical rank} of $\ten{X}$ with respect to the dimension tree $\set{T}$ is {\color{black}$\vec{r}=(r_1,\ldots,r_z)^T\in\N^{z}$} such that {\color{black}$r_k = \text{rank}(\mat{X}^{(\set{\footnotesize S})})$} where $\set{S}\in\set{T}$ is the $k$-th node.
\end{definition}
An example of \textcolor{black}{a} dimension tree is depicted in Figure~\ref{fig:HT:dimtree} for a generic order-$8$ tensor. To simplify the exposition, we denote by ${r}_\set{S}$ and $r_k$ the hierarchical rank of $\ten{X}$ associated with the internal node $\set{S}$ and the leaf $\{k\}$, respectively. Additionally, we define the \textcolor{black}{leaf} rank as $\vec{r}_{\set{L}}:= (r_1, \dots, r_d)$.

Let $\ten{X}\in\R^{\vec{n}}$ be an order-$d$ tensor, $\set{T}$ a dimension tree of depth $z$ and $\vec{r}\in\R^{z}$ the hierarchical rank of $\ten{X}$ associated with the dimension tree $\set{T}$\textcolor{black}{;} the H-Tucker associates an orthogonal matrix $\mat{U}_k$ to each leaf $\set{L}_{k}$ such that ${\rm Range}(\mat{U}_k) = {\rm Range}(\mat{X}^{(k)})$. Similarly, it associates an orthogonal matrix $\mat{U}_{\set{\footnotesize S}}$ of size $(n_{\set{\footnotesize S}} \times r_{\set{\footnotesize S}})$ with each internal node $\set{S}$ {\color{black}such that {\rm (i)} ${\rm Range}(\mat{U}_{\set{\footnotesize S}}) = {\rm Range}(\mat{X}^{(\set{\footnotesize S}\;)})$ and {\rm (ii)} ${\rm Range}(\mat{U}_{\set{\footnotesize S}}) \subseteq {\rm Range}(\mat{U}_{\set{\footnotesize S}_\ell} \kron \mat{U}_{\set{\footnotesize S}_r})$,}
where $\kron$ denotes the Kronecker product, and $\mat{U}_{\set{\footnotesize S}_\ell}$ and  $\mat{U}_{\set{\footnotesize S}_r}$ are $(n_{\set{\footnotesize S}_\ell} \times r_{\set{\footnotesize S}_\ell})$ and $(n_{\set{\footnotesize S}_r} \times r_{\set{\footnotesize S}_r})$ orthogonal matrices associated with the children of $\set{S}$, i.e., $\set{S}_\ell$ and $\set{S}_r$. 
The second condition describes the \emph{nestedness property}, namely that the subspace spanned by the parent node basis is included in the subspace spanned by the Kronecker product of its children bases. This property guarantees that we can express the columns of $\mat{U}_{\set{\footnotesize S}}$ as a linear combination of the columns of the \textcolor{black}{leaf} orthogonal bases, reducing the storage costs. Recall that the \textcolor{black}{leaf} orthogonal bases are $n_k \times r_k$ matrices, while the internal \textcolor{black}{node} bases have $n_{\set{\footnotesize S}}$ rows. Thus, if by using the nestedness property we find a way to avoid storing the internal \textcolor{black}{node} bases, then we can reduce the storage complexity. The solution proposed in~\cite{Grasedyck2009} is storing the matrix that allows us to express each internal node basis as a linear combination of the Kronecker product of its children bases, that is\textcolor{black}{,} the matrix $\mat{B}_{\set{\footnotesize S}}$ is such that
$\mat{U}_{\set{\footnotesize S}}= (\mat{U}_{\set{\footnotesize S}_\ell} \kron \mat{U}_{\set{\footnotesize S}_r})\mat{B}_{\set{\footnotesize S}}$,
for every internal node $\set{S}$. In~\cite{Grasedyck2009}, the entries of $\mat{B}_{\set{\footnotesize S}}$ are organized into 
%
 an order-$3$ tensor, $\ten{B}_{\set{\footnotesize S}}$, of size $(r_{\set{\footnotesize S}}, r_{\set{\footnotesize S}_{\ell}}, r_{\set{\footnotesize S}_r})$, called \emph{transfer tensor}, such that 
$\mat{U}_{\set{\footnotesize S}} = \bigl(\mat{U}_{\set{\footnotesize S}_\ell}\kron \mat{U}_{\set{\footnotesize S}_r}\bigr)\bigl(\mat{B}_{\set{\footnotesize S}}^{\;(1)}\bigr)^{T}$, 
where $\mat{B}_{\set{\footnotesize S}}^{\;(1)}$ denotes the mode-1 unfolding of $\ten{B}_{\set{\footnotesize S}}$.
Thanks to this construction, the H-Tucker factorizes an order-$d$ tensor into $d-1$ transfer tensors of order-$3$, and $d$ \textcolor{black}{leaf} factor matrices, reducing the overall storage costs from $\bigO(n^d)$ to $\bigO(drn + (d-1)r^2)$ where $n$ is the largest dimension of $\ten{X}$ and $r$ is the largest rank of $\mat{X}^{(\set{\footnotesize S})}$ for every node of the dimension tree.

In~\cite{Grasedyck2009}, two algorithms are proposed to compute the H-Tucker decomposition: the root-to-leaves (RtL-HT) and the leaves-to-root (LtR-HT). The RtL-HT has an algorithmic scheme affine to the T-HOSVD, that is, all the orthogonal bases are computed from the input dense tensor; the LtR-HT has an algorithmic scheme affine to the ST-HOSVD, that is, every time an orthogonal basis is formed, it is used to reduce the size of the input tensor. To be consistent with the framework presented in the first half of this manuscript, we focus on the RtL-HT algorithm. 
Given an input order-$d$ tensor $\ten{X}\in\R^{\vec{n}}$ and a dimension tree $\set{T}$ of depth $z$, the \textcolor{black}{leaf} factors are the Tucker factor matrices computed via the HOSVD of $\ten{X}$ at multilinear rank $\vec{r}_{\set{L}}$. The core tensor is not formed. 
Then, the algorithm visits all the internal nodes moving toward the root, from level $z-1$ to level $1$. For every internal node $\set{S}$ we compute the SVD of $\mat{X}^{(\set{\footnotesize S})}$, forming $\mat{U}_\set{\footnotesize S}$ with the $r_{\set{\tiny S}}$ dominant left-singular vectors. The transfer tensor between node $\set{S}$ and its children $\set{S}_\ell$ and $\set{S}_r$ is computed as 
\begin{equation}
\label{eq:transften:comp}
    \ten{B}_{\set{\footnotesize S}}(i,j,k) = \langle \vec{u}_i, \vec{v}_j\kron \vec{w}_k\rangle,
\end{equation}
where $\vec{u}_i$ is the $i$-th column of $\mat{U}_{\set{\footnotesize S}}$, $\vec{v}_j$ is the $j$-th column of $\mat{U}_{\set{\footnotesize S}_\ell}$, and $\vec{w}_k$ is the $k$-th column of $\mat{U}_{\set{\footnotesize S}_r}$. 
Once all the internal nodes at level $z-1$ have been visited, the algorithm repeats these steps for the internal nodes at level $z-2$, until reaching the root of the dimension tree. Once 
level $0$ is reached, the transfer tensor between the root and its two children is computed as
\begin{equation}
\label{eq:transftenR:comp}
    \ten{B}_{\set{\footnotesize R}}(1,j,k) = \langle \text{vec}(\ten{X}), \vec{v}_j\kron \vec{w}_k\rangle,    
\end{equation}
where $\vec{v}_j$ is the $j$-th column of $\mat{U}_{\set{\footnotesize R}_\ell}$, and $\vec{w}_k$ is the $k$-th column of $\mat{U}_{\set{\footnotesize R}_r}$.  

Assuming to compute the RtL-HT of an order-$d$ tensor of size $\vec{n}$ along each mode and using~\cite[Lemma 3.19]{Grasedyck2009}, we conclude that 
the computational complexity of the RtL-HT algorithm corresponds to the cost of $2(d-1)$ SVDs, that is
 $   \mathcal{O}\Bigl(\bigl(\prod_{i=1}^dn_i\bigr)^{3/2}\Bigr)$.
This computational complexity reduces to $\mathcal{O}(n^{3d/2})$ if the modes are all equal in size. 

In~\cite[Remark 3.12]{Grasedyck2009}, the author proves that given an order-$d$ tensor $\ten{X}\in\R^{\vec{n}}$ and a hierarchical rank $\vec{r}$ associated with a complete binary dimension tree $\set{T}$, if $\widehat{\ten{X}}$ denotes the approximation of $\ten{X}$ produced by the RtL-HT algorithm using $\set{T}$ and $\vec{r}$, then
\[
\|{\ten{X} - \widehat{\ten{X}}}\|_F \le \sqrt{2d-3} \norm{{\ten{X}}-{\ten{X}}^{\star}}_F, 
\]
where $\ten{X}^{\star}$ denotes the best approximation of $\ten{X}$ at the hierarchical rank $\vec{r}$.

 To the best of our knowledge, no contribution available in the current literature has ever proposed the use of RSVD within either the RtL-HT or the LtR-HT algorithm. While its implementation is trivial -- just replace each SVD within {\color{black}RtL-HT/LtR-HT} with \textcolor{black}{an} RSVD -- we believe its analysis is far from easy. Indeed, in the H-Tucker, the quality of the matrix $\mat{U}_{\set{S}}$ necessarily impacts the quantities constructed in the following levels of the tree, {\color{black}either directly as in LtR-HT or implicitly via the transfer tensors as in RtL-HT}. Tracking down the inexactness introduced by RSVD at a certain level of the tree, studying its impact on the objects computed at the following levels, and assessing the overall accuracy of the scheme would be a significant scientific contribution itself\textcolor{black}{,} and we refrain from carrying it out here. Therefore, we do not have any error bound achieved by the subsampled randomized  H-Tucker scheme we propose in the next section, as we did for Sub-R-HOSVD; cf.~Theorem~\ref{theorem:errorbound_SubRHOSVD}. The main goal of the following sections is to design this scheme and show its computational performance. We leave its challenging, yet important, error analysis \textcolor{black}{for future work}.

\subsection{Subsampled randomized RtL-HT}\label{Subsampled randomized LtR-HT}
In this section, we present the second most important contribution of this paper. Our goal is to further reduce the complexity of decomposing an order-$d$ tensor $\ten{X}\in\R^{\vec{n}}$ by the RtL-HT algorithm for a given dimension tree $\set{T}$ and hierarchical rank $\vec{r}\in\R^{z}$ in terms of both flops and storage demand. 

Firstly, we assume that $r_{\set{\footnotesize S}}=\text{rank}(\mat{X}^{(\set{\footnotesize S})})$ for every node $\set{S}\in\set{T}$ and no truncation is performed. We assume to take as input an oversampling parameter, $p\in\N$, and  a sampling vector $\vec{w}\in\N^z$, such that $w_k$ corresponds to the number of columns to sample from $\mat{X}^{(\set{\footnotesize S})}$ where $\set{S}\in\set{T}$ is the $k$-th node. As for the hierarchical rank, to simplify the exposition, we denote by ${w}_\set{S}$ and $w_k$ the sampling value associated with the internal node $\set{S}$ and the leaf $\{k\}$, respectively.  Additionally, we define the \textcolor{black}{leaf} sampling vector as $\vec{w}_{\set{\footnotesize L}}:= (w_1, \dots, w_d)$.

In the deterministic RtL-HT the first step is computing the Tucker factors via the T-HOSVD of the input tensor at multilinear rank $\vec{r}_{\set{\tiny L}}$, and \textcolor{black}{associating} each of them to the corresponding leaf of $\set{T}$. 
In our new algorithm, we replace this step by a call to the Sub-R-HOSVD of the input tensor at multilinear rank $\vec{r}_{\set{\tiny L}}$ with sampling $\vec{w}_{\set{\tiny L}}$ and oversampling $p$. Thus, as described in section~\ref{sec:sr-HOSVD}, for each mode $k$, we form $\mat{Y}_k:=\mat{X}^{(k)}\mat{E}_k\in\R^{n_k\times w_k}$ where $\mat{E}_k\in\R^{n_{\neq k}\times w_k}$ contains on its columns $w_k$ randomly chosen vectors of the canonical basis of $\R^{n_{\neq k}}$, and then use the randomize\textcolor{black}{d} range-finder algorithm to approximate $\text{Range}(\mat{Y}_k)$. Notice that the core tensor is not formed, to reduce the computational complexity.

Once the leaf factors are computed, we visit all the internal nodes of $\set{T}$ going toward the root. 
For each node $\set{S}$ at level $z-1$, we form $\mat{Y}_{\set{\footnotesize S}} := \mat{X}^{(\set{\footnotesize S})}\mat{E}_{\set{\footnotesize S}}$ of size $(n_{\set{\footnotesize S}} \times w_{\set{\footnotesize S}})$ where $\mat{E}_{\set{\footnotesize S}}$ has $w_{\set{\footnotesize S}}$ columns corresponding to $w_{\set{\footnotesize S}}$ randomly chosen vectors of the canonical basis of $\R^{n_{\smallnotin{\set{\footnotesize {S}}}}}$. The construction of $\mat{Y}_{\set{\footnotesize S}}$ and the approximation of its range follow the same strategy described in section~\ref{sec:sr-HOSVD}. After forming the $(n_{\set{\footnotesize S}} \times r_{\set{\footnotesize S}})$ matrix $\mat{U}_{\set{\footnotesize S}}$, whose columns form an orthogonal basis approximating $\text{Range}(\mat{X}^{(\set{\footnotesize S})})$, the final steps of the node $\set{S}$ visit are equal to those of the deterministic RtL-HT. We compute the transfer tensor $\ten{B}_{\set{\footnotesize S}}$ 
following Equation~\eqref{eq:transften:comp}. 
Once all the nodes at level $z-1$ have been visited in this manner, we repeat the same visit steps for all the nodes of level $z-2$. The algorithm terminates once the dimension tree root is reached, and the transfer tensor to the root, $\ten{B}_{\set{\footnotesize R}}$, is computed as in Equation~\eqref{eq:transftenR:comp}. This novel algorithm, called {Sub-R-RtL-HT}, is summarized in  Algorithm~\ref{alg:Sub-R-HT}.

\textcolor{black}{The computational cost of the leaf factors} in the {Sub-R-RtL-HT} algorithm corresponds to the computational cost of the {Sub-R-HOSVD}\textcolor{black}{,} that is\textcolor{black}{,}
 $\mathcal{O}(\sum_{k=1}^d n_kw_kr_k+n_kr_k^2)$.   
To this term, we must add the computational complexity of the $d-2$ internal \textcolor{black}{node} SVDs, that is\textcolor{black}{,}
 $   \mathcal{O}\Bigl(\sum_{\set{\tiny S} \in(\set{T}-\set{L})}n_{\set{\tiny S}}w_{\set{\tiny S}}r_{\set{\tiny S}}  + n_{\set{\tiny S}}r_{\set{\tiny S}}\Bigr)$. 
 Assuming all the tensor modes are equal to $n$, all \textcolor{black}{node} sampling values and hierarchical rank values are equal to $w$ and $r$ respectively, then the overall computational complexity results in $\mathcal{O}\Bigl(\sum_{k=2}^{\lceil{d/2}\rceil}n^kwr  + n^kr\Bigr)$, \textcolor{black}{significantly reducing} the computational costs. 
 The main memory usage of Algorithm~\ref{alg:Sub-R-HT} is given by the storage of the matrix $\mat{Y}_k$ that amounts to $w_k$ vectors of length $n_k$ for the leaves, that is way less than $\prod_{i=1}^dn_i$ for $w_k\ll n_{\neq k}$ for its \textcolor{black}{corresponding deterministic} algorithm. Similarly, for the internal node $\set{S}$, we store $\mat{Y}_{\set{\tiny S}}$ that amounts to $w_{\set{\tiny S}}$ vectors of length $n_{\set{\tiny S}}$, while in its deterministic corresponding algorithm we store $n_{ \smallnotin \set{\tiny S}}$ columns of the same length. 

\begin{algorithm}[t!]
	\centering
	\caption{$[\{\mat{U}_k\}\,, \{\ten{B}_{\set{\footnotesize S}}\}]$ = Sub-R-RtL-HT($\ten{X}$, $\set{T}$, $\vec{r}$, $\vec{w}$, $p$)}\label{alg:Sub-R-HT}
	\begin{algorithmic}[1]
		\Input{$\ten{X}$ an order-$d$ tensor of size $\vec{n}$, a dimension tree $\set{T}$, a prescribed hierarchical rank $\vec{r}$, a sampling vector $\vec{w}$ and an oversampling parameter $p$}
		\Output{the \textcolor{black}{leaf} factor matrices $\{\mat{U}_k\}$, and the internal \textcolor{black}{node} transfer tensor $\{\ten{B}_{\set{\footnotesize S}}\}$}
        \State Initialize $z$ as the depth of the dimension tree $\set{T}$, and $\set{L}$ as the leaves of $\set{T}$.
        \State $\{\mat{U}_k\}$ = subr-HOSVD($\ten{X}$, $\vec{r}_{\set{\tiny L}}$, $p$, $\vec{w}_{\set{\footnotesize L}}$) 
		\For{$h = z-1, \ldots, 1$}
        \For{$\set{S}$ internal node at level $h$}
        \State Randomly pick $w_{\set{\footnotesize S}}$ fibers of $\ten{X}$ along modes in $\set{S}$ and store them in $\mat{Y}_{\set{\footnotesize S}}$
        \Statex{\hspace{3em}\(\triangleright\) $\mat{Y}_{\set{\footnotesize S}}$ has size $(n_{\set{\tiny S}} \times w_{\set{\tiny S}})$ where $\set{S}_{\ell}$ and $\set{S}_r$ are the children of $\set{S}$}
        \State Generate a Gaussian sketch matrix $\mat{\Omega}_k$ of size $({w_{\set{\footnotesize S}} \times (r_{\set{\footnotesize S}}+p)})$
		\State Compute an orthonormal basis {\color{black}$\mat{Q}_{\set{\footnotesize S}}$} for $\text{Range}(\mat{Y}_{\set{\footnotesize S}} \mat{\Omega}_{\set{\footnotesize S}})$
        \State {\color{black} Compute $\mat{W}_{\set{\footnotesize S}}$ storing the dominant $r_{\set{\footnotesize S}}$ left singular vectors of $\mat{Q}_{\set{\footnotesize S}}^T\mat{Y}_{\set{\footnotesize S}}$
        }
        \State {\color{black} Set $\mat{U}_{\set{\footnotesize S}}=\mat{Q}_{\set{\footnotesize S}}\mat{W}_{\set{\footnotesize S}}$}
        \State Compute the transfer tensor $\ten{B}_{\set{S}}$ between the basis of node $\set{S}$ and the bases of its children
        \[
            \ten{B}_{\set{\footnotesize S}}(i,j,k) = \langle \mat{U}_\set{S}(\cdot, i),\mat{U}_{\set{S}_\ell}(\cdot, j)\kron \mat{U}_{\set{S}_r}(\cdot, k)\rangle
        \]
        \EndFor
		\EndFor
        \State Compute the transfer tensor $\ten{B}_{\set{R}}$ between the basis of root $\set{R}$ and the bases of its children
        \[
            \ten{B}_{\set{\footnotesize R}}(1,j,k) = \langle \text{vec}(\ten{X}),\mat{U}_{\set{R}_\ell}(\cdot, j)\kron \mat{U}_{\set{R}_r}(\cdot, k)\rangle
        \]
	\end{algorithmic}
\end{algorithm}


{\color{black}
\begin{remark}
{\rm
Our randomization-based approach (fiber sampling plus range-finder) can be easily adopted within LtR-HT as well. However, the computational gains coming from this strategy would be much more modest than those achieved in \textcolor{black}{the} case of RtL-HT. Indeed, similar to ST-HOSVD, LtR-HT sees a dimension reduction once all the matrices $\mat{U}_\set{S}$ at level $h$, $h=z-1,\ldots,1$, have been computed. Further reducing small\textcolor{black}{-}dimensional objects by randomization leads to limited benefits. Moreover,
 LtR-HT presents a more sequential nature compared to RtL-HT. Indeed, in the former, only the SVDs at the same level can be potentially computed in parallel whereas in the latter we can compute all the $2(d-1)$ necessary SVDs at the same time. This feature is the main reason why we decided to work on RtL-HT rather than LtR-HT.
}\end{remark}
}

\subsection{Numerical experiments}\label{Numerical experiments - HT}
In this section, we report the numerical performance of {Sub-R-RtL-HT}, RtL-HT, and {LtR-HT} on artificially constructed problems in a serial (section~\ref{sec:ht:seqexp}) and parallel (section~\ref{sec:ht:parexp}) computing environment.
Similarly to the standard Tucker case, we assess the performance of the method on synthetic tensors while varying the tensor order $d$.
{\color{black} Further numerical results can be found in the supplementary material.}
\subsubsection{Serial tests}\label{sec:ht:seqexp}

\textcolor{black}{The tensor considered in this example is taken from~\cite[Section 6.1]{Grasedyck2009} and it is constructed as follows.
We fix the mode dimension to $n=15$ and consider tensor orders ranging from $d=4$ to $d=8$. For each tensor order, the entries of the tensor are defined as
\[
\ten{X}(i_1,\ldots,i_d)
=
\left(\sum_{j=1}^d i_j^2\right)^{-1/2}.
\]
Equivalently, the tensor can be interpreted as the evaluation of the function
$f(\vec{x}) = \norm{\vec{x}}_F^{-1}$
on a uniform grid over $[1,15]^d$.\\
The target rank is chosen according to the standard logarithmic dependence on the desired accuracy, namely $k \sim \log(1/\varepsilon)$; see~\cite{Grasedyck2009}. In particular, setting $\varepsilon=10^{-3}$ results in $k=7$.
We compare our method with the deterministic RtL-HT and LtR-HT methods.
The results are reported in Figure~\ref{fig:HT:norm}.
For each tensor order, boxplots summarize the distribution of approximation errors (Figure~\ref{fig:HT:norm:medianerr:BOX}) while logarithmic-scale plots report the median performance as a function of $d$ in order to highlight the scaling behavior of the methods in terms of accuracy (Figure~\ref{fig:HT:norm:medianerr}) and running time (Figure~\ref{fig:HT:norm:mediantime}). \\
The results achieved by {Sub-R-RtL-HT} have been obtained by considering different sampling factors. In particular, the parameter reported in the legend of Figure~\ref{fig:HT:norm} for {Sub-R-RtL-HT} denotes a scaling coefficient, which controls the number of fibers employed at each node of the dimension tree. More precisely, for every unfolding associated with a given node $\set{S}$, the algorithm selects $\min\{\alpha^{d-2} \cdot n_{\set{S}},\, n_{\smallnotin\set{S}}\}$ fibers, where $n_{\set{S}}$ is the number of rows of the corresponding unfolding and $n_{\smallnotin\set{S}}$ is the total number of available fibers. We use the scaling factor $\alpha^{d-2}$ so that the sampling size increases with the tensor order. We stress, once again, that tensor unfoldings are never explicitly formed in the implementation; the description above is provided only for clarity and to give an intuitive interpretation of the sampling rule. \\
The observed trends are consistent with those obtained in the Tucker setting. {Sub-R-RtL-HT} achieves satisfactory approximation accuracy while attaining computational timings significantly smaller than those obtained by its deterministic counterpart, namely RtL-HT. This speed-up is mainly due to the subsampling strategy, which avoids the construction of large intermediate objects and allows the algorithm to operate on substantially smaller representations. It is interesting to notice how {Sub-R-RtL-HT} is rather competitive with {LtR-HT}. The latter method is often the method of choice for computing the H-Tucker decomposition. Indeed, thanks to the dimension reductions taking place every time the left singular vectors of the matricization at a given node are available, this method is rather fast. However, its serial nature is clearly not suited for parallelization. On the other hand, as we will show in the next section, {Sub-R-RtL-HT} is easily parallelizable while also achieving satisfactory results in a serial computing environment.}

\begin{figure}[t!]
\centering
\subfloat[Median relative error \textcolor{black}{on the y-axis}.]{\label{fig:HT:norm:medianerr}
    \includegraphics[width=0.4\textwidth]
    {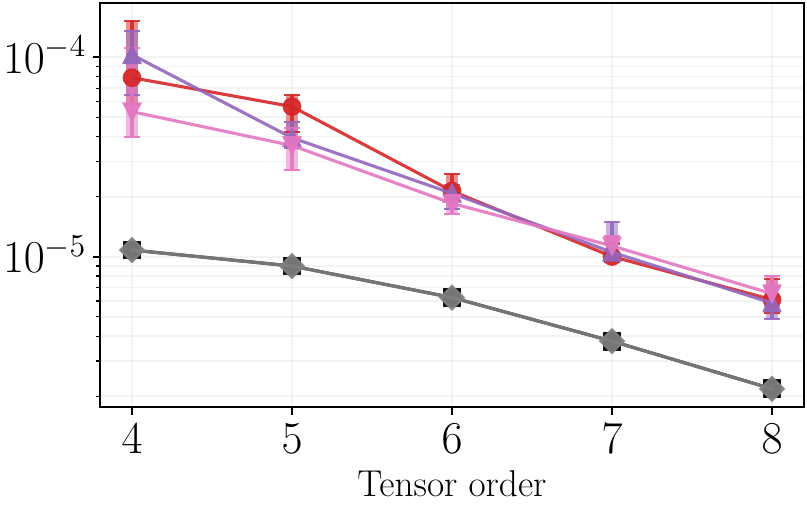}}
\hspace{15pt}
\subfloat[Median computational time \textcolor{black}{on the y-axis}.]{\label{fig:HT:norm:mediantime}
    \includegraphics[width=0.4\textwidth]
    {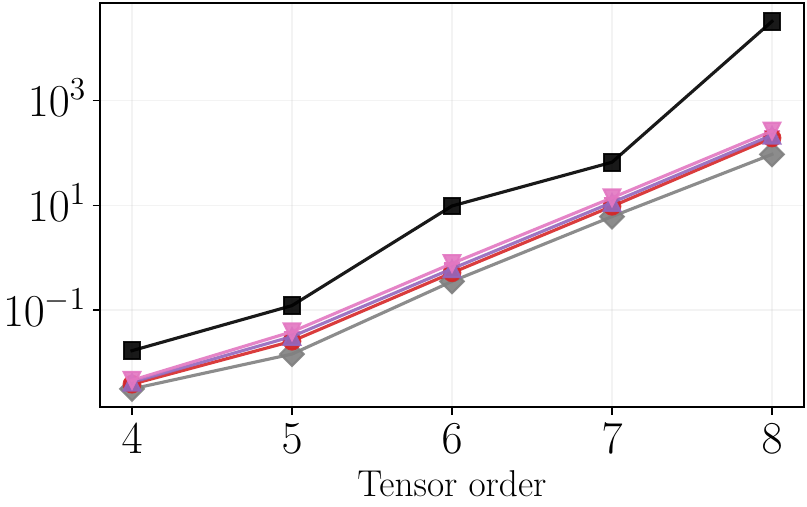}}
\vspace{5pt}
\subfloat[Boxplot of the relative errors for each order.]{\label{fig:HT:norm:medianerr:BOX}
    \includegraphics[width=0.99\linewidth]{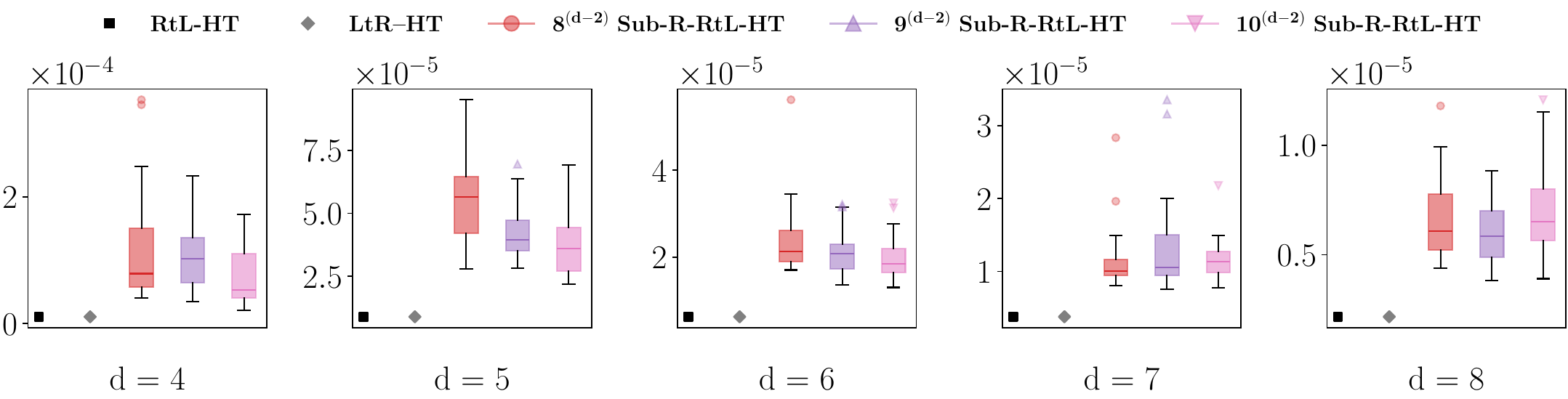}}
\caption{\color{black}Comparison of the considered methods on synthetic tensors with entries from the evaluation of the function $f(\vec{x}) = \norm{\vec{x}}_F^{-1}$ on a uniform grid over $[1, 15]^d$. All the randomized algorithms are tested over $25$ independent runs.}
\label{fig:HT:norm}
\end{figure}

\subsubsection{Parallel tests for {Sub-R-RtL-HT}}\label{sec:ht:parexp}

In this section we assess the parallel scalability of {Sub-R-RtL-HT}. 
We test the parallel implementation on low-rank tensors in the \textcolor{black}{hierarchical sense} of fixed order $d=8$ and increasing mode dimension from $n=16$ to $n=20$, with a fixed hierarchical rank $r=6$ in all cases, which is also the truncation rank used in the decomposition.
All tensors are randomly generated in the same way as described in Section~\ref{sec:ht:seqexp}, and at each node of the dimension tree we extract the same number of fibers, equal to $20n$ for each tensor.

One of the main advantage\textcolor{black}{s} of the RtL approach over LtR is the complete independence of the  SVD computations associated with the nodes of the dimension tree. This means that these SVDs can all be performed in parallel, at the same time. However, as we will show in this section, it is important to keep in mind that these SVDs do not all have the same cost due to the different dimensions of the objects we handle.  A naive distribution of the workflow may result in processes waiting \textcolor{black}{a long time} for others to complete their tasks, thus reducing the overall speed\textcolor{black}{-}up of the algorithm. Maintaining a balanced workflow among all the available processes is vital, yet challenging, to make the most out of a parallel implementation. The main goal of this section is to show the potential of our randomized approach but, at the same time, to illustrate some of the limitations intrinsic to the H-Tucker decomposition.

For a tensor of order $d=8$ and for the dimension tree shown in Figure~\ref{fig:HT:dimtree}, the algorithm requires a total of $14$ SVD computations and $7$ transfer tensors. In all our tests, we first compute the matrices $\mat{U}_\set{S}$, for all the nodes of the \textcolor{black}{tree}, in a distributed fashion. Once we have these matrices, we compute the transfer tensors $\ten{B}_i$, $i=1,\ldots,6$, again in parallel. At the end, the master process computes $\ten{B}_0$. These are our computational choices which led to satisfactory results, as we will see shortly. However, different, possibly better, distributions of the workflow can be explored as well.

We start by testing the scalability of our approach for $1$, $2$, $4$, and $8$ processes and in Figure~\ref{fig:HT:PAR} we report the results. The first thing to observe by looking at Figure~\ref{fig:HT:PAR:time} is the cost of the serial portion of the algorithm, which turns out to be much larger than in the case of Tucker (cf. Figure~\ref{fig:Tuck:PAR:time}). The main serial cost of our Sub-R-RtL-HT is the construction of the root transfer tensor. 
In principle, this step only requires the left singular vectors of the children \textcolor{black}{of the root} to be completed, but in the current implementation it is performed after all SVD computations have been completed, resulting in an additional serial section at the end of the parallel phase.
Nevertheless, {Figure~\ref{fig:HT:PAR:speedup}} showcases good scalability speed-ups which, as in the Tucker case, deteriorate as $n$ increases.
To mitigate this effect, we parallelize the fiber extraction step, following the same strategy used for {Sub-R-HOSVD}. 
In this setting, each process performs the extraction using $20$ workers. 
Although this introduces a mild oversubscription, since the total number of requested workers exceeds the number of available cores, the overall performance improves. 
This is confirmed in Figure~\ref{fig:HT:PAR_parextraction}, where we report the scaling for all process counts highlighting the improved speed-up.

To conclude, we would like to comment on some limitations of the parallelization of the RtL-HT algorithm. As we mentioned earlier, maintaining a balanced workflow is key to fully \textcolor{black}{capitalizing} on the parallel computing architecture at hand.
To show the importance of this aspect we now test the parallel scalability using $1$, $2$, $4$, $7$, $8$, and $14$ processes, with a parallel extraction of the fiber. The results are reported in Figure~\ref{fig:HT:PAR_parextraction_1_to_14}. From both the total execution timings (Figure~\ref{fig:HT:PAR_parextraction_1_to_14:time}) and the speed-ups (Figure~\ref{fig:HT:PAR_parextraction_1_to_14:speedup}) we can readily notice how passing from $4$ to $7$ processes or $8$ to $14$ is not really beneficial but rather a \textcolor{black}{waste} of computational \textcolor{black}{resources}.
This behavior is due to the unbalanced cost of the Sub-RSVD computations across the nodes of the tree, with the most expensive operations occurring at the $8$ leaf nodes. 
Moreover, as the dimension $n$ increases, the scalability worsens, similarly to what was observed for {Sub-R-HOSVD}. 
This effect is mainly due to the slower memory access caused by the simultaneous reading of large data by multiple processes.
We believe that these results show that, even though RtL is amenable \textcolor{black}{to} a full parallelization of the algorithm -- with 14 processes we were able to compute all the 14 necessary SVDs in parallel -- this does not necessarily lead to good performance and a more careful, less wasteful  implementation can achieve similar results.

\begin{figure}[t!]
\centering
\subfloat[Strong scaling as a function of the number of processes.]{\label{fig:HT:PAR:time}
\includegraphics[width=0.99\linewidth]{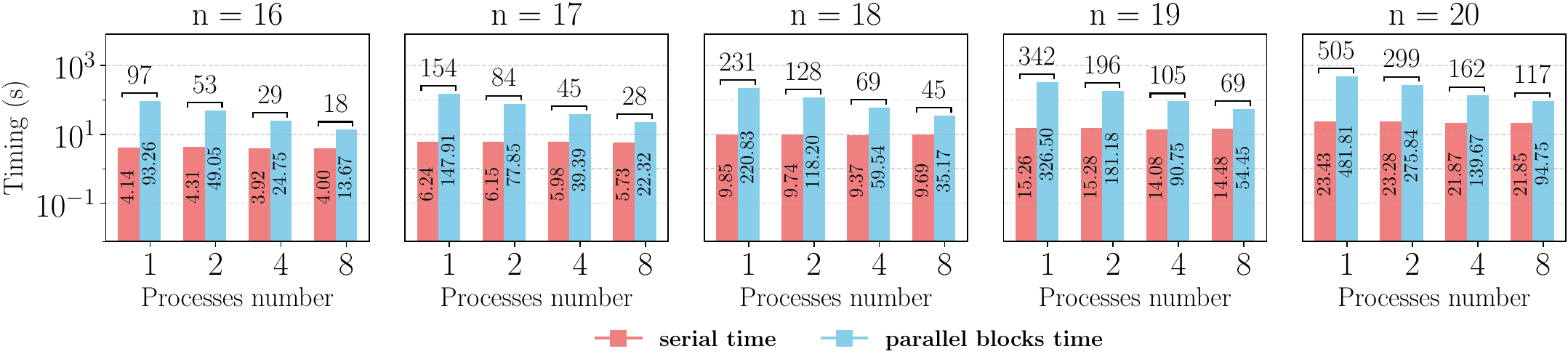}} \\
\subfloat[Speed-up  as a function of the number of processes.]{\label{fig:HT:PAR:speedup}
\includegraphics[width=0.99\linewidth]{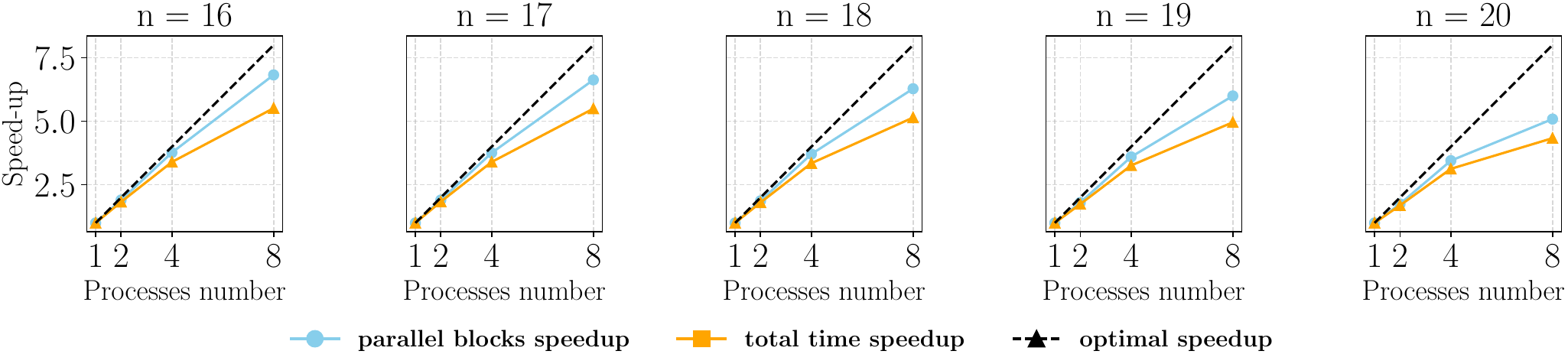}}
\caption{{Parallel performance of Sub-R-\textcolor{black}{RtL}-HT using $1$, $2$, $4$ and $8$ processes. The dashed line with slope~1 denotes the ideal linear speed-up.}}
\label{fig:HT:PAR}
\end{figure}

\begin{figure}[t!]
\centering
\subfloat[Strong scaling as a function of the number of processes.]{\label{fig:HT:PAR_parextraction:time}
\includegraphics[width=0.99\linewidth]{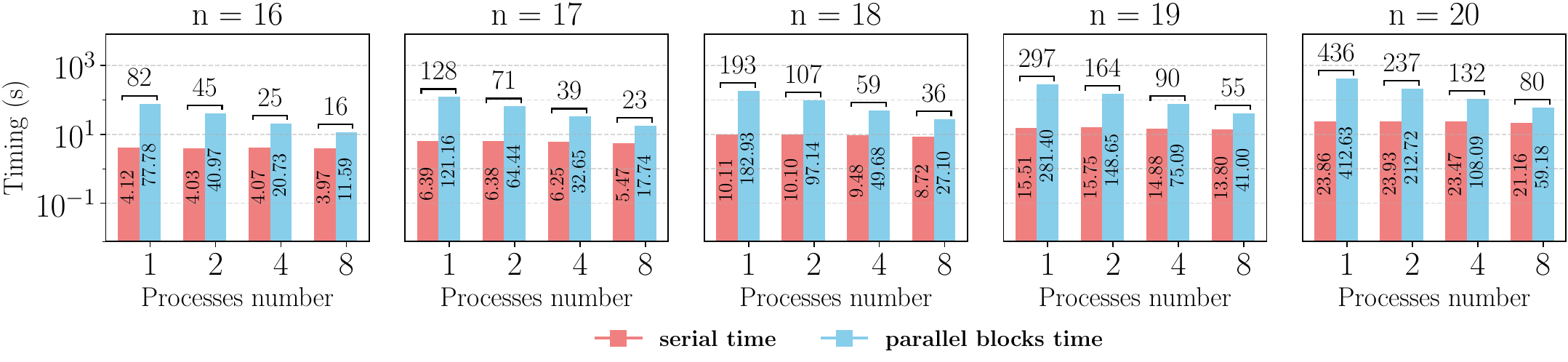}} \\


\subfloat[Speed-up  as a function of the number of processes.]{\label{fig:HT:PAR_parextraction:speedup}
\includegraphics[width=0.99\linewidth]{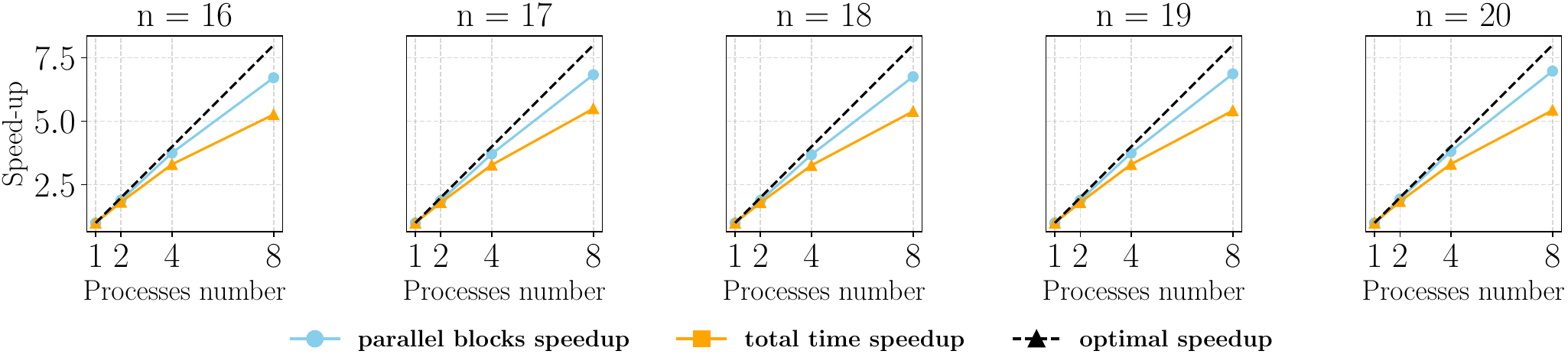}}
\caption{{Parallel performance of Sub-R-\textcolor{black}{RtL}-HT using $1$, $2$, $4$ and $8$ processes and parallel extraction. The dashed line with slope~1 denotes the ideal linear speed-up.}}
\label{fig:HT:PAR_parextraction}
\end{figure}
}{
\centering{\color{black} 
}
}

\begin{figure}[t!]
\centering
\subfloat[Strong scaling as a function of the number of processes.]{\label{fig:HT:PAR_parextraction_1_to_14:time}
\includegraphics[width=0.99\linewidth]{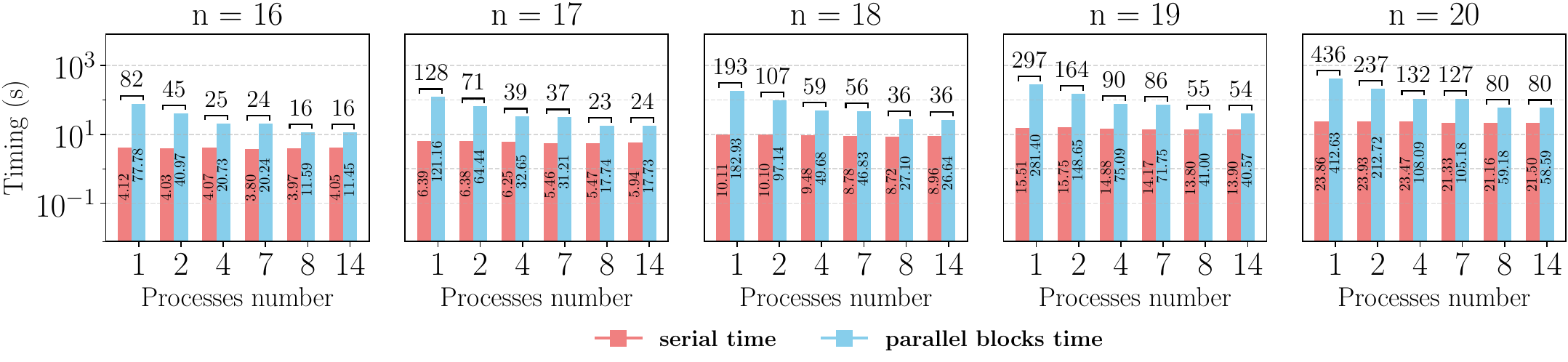}} \\
\subfloat[Speed-up  as a function of the number of processes.]{\label{fig:HT:PAR_parextraction_1_to_14:speedup}
\includegraphics[width=0.99\linewidth]{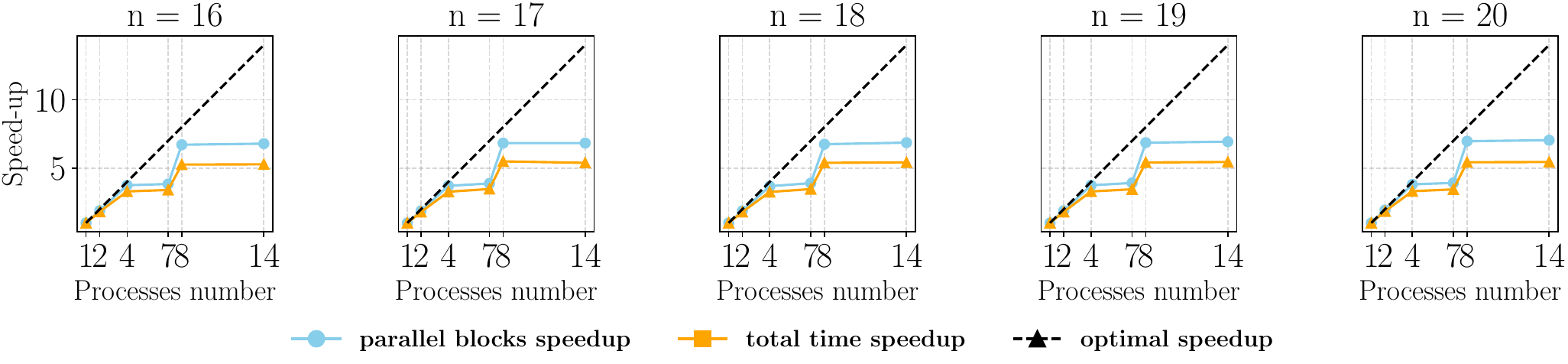}}
\caption{{Parallel performance of Sub-R-\textcolor{black}{RtL}-HT using $1$, $2$, $4$, $7$, $8$, and $14$ processes and parallel extraction. The dashed line with slope~1 denotes the ideal linear speed-up.}}
\label{fig:HT:PAR_parextraction_1_to_14}
\end{figure}

    \section{Conclusions}\label{Conclusions}
    A novel mode-parallel implementation of the Tucker decomposition has been proposed. This has been achieved by combining a sampling step along with a range-finding strategy. In particular, the sampling step is crucial to prevent the expensive construction of any matricization of the tensor thus leading to dramatic savings in both the computational costs and memory requirements of our tensor compression scheme. It is thank to this significant reduction in storage allocation that we are actually able to run our scheme in a mode-parallel fashion. Indeed, we no longer need to store $d$ copies of the full tensor as required by a naive mode-parallel implementation of the Tucker decomposition. In addition to the sampling step, the range-finder improves the quality of the computed approximation. 
    We also derived error bounds attained by our novel numerical scheme. Due to the randomized nature of the tools employed in our procedure (sampling and range-finder), these error bounds are in expected value. A panel of different numerical experiments, both in a serial and parallel computing setting, showcases the performance of our novel approach. In particular, our method turns out to be one order of magnitude faster than state-of-the-art randomization-based techniques while always achieving very similar accuracy records. Strong scaling results also illustrate the good scalability of our approach. 
    We generalized the scheme to the computation of the H-Tucker decomposition for which promising results have been reported.

We believe this work opens to a couple of very interesting research questions. The first one would be to get a better understanding of the phenomenon observed in, e.g., Figure~\ref{fig:Tuck:37:medianerr}, namely the error attained by our Sub-R-HOSVD scales better with respect to the number of modes $d$ than what happens for the (deterministic) HOSVD. As already mentioned, we believe this is due to the handling of much smaller objects in our setting which may alleviate error accumulation in finite arithmetic. It would be thus interesting to explore a finite precision analysis of our scheme. Secondly, a thorough study of the H-Tucker decomposition equipped with RSVD is still lacking in the literature. The derivation of error bounds for this scheme could be seen as a first step towards getting bounds for our Sub-R-RtL-HT.

\section*{Acknowledgments}
The first, second, and last authors are members of the INdAM Research
Group GNCS. Moreover, their work
was partially supported by the European Union - NextGenerationEU under the National Recovery and Resilience Plan (PNRR) - Mission 4 Education and research
- Component 2 From research to business - Investment 1.1 Notice Prin 2022 - DD N. 104 of 2/2/2022,
entitled “Low-rank Structures and Numerical Methods in Matrix and Tensor Computations and their
Application”, code 20227PCCKZ – CUP J53D23003620006.
%
\bibliographystyle{siamplain}
\bibliography{references}

	
\end{document}